\numberwithin{equation}{section}
\begin{document}

	\newtheorem{thm}{Theorem}[section]
	\newtheorem{prop}[thm]{Proposition}
	\newtheorem{lem}[thm]{Lemma}
	\newtheorem{cor}[thm]{Corollary}
	\newtheorem{rem}[thm]{Remark}
	\newtheorem*{defn}{Definition}

	\newtheorem{definit}[thm]{Definition}
	\newtheorem{setting}{Setting}
	\renewcommand{\thesetting}{\Alph{setting}}
	
	\newcommand{\DD}{\mathbb{D}}
	\newcommand{\NN}{\mathbb{N}}
	\newcommand{\ZZ}{\mathbb{Z}}
	\newcommand{\QQ}{\mathbb{Q}}
	\newcommand{\RR}{\mathbb{R}}
	\newcommand{\CC}{\mathbb{C}}
	\renewcommand{\SS}{\mathbb{S}}

	\renewcommand{\theequation}{\arabic{section}.\arabic{equation}}

	\newcommand{\supp}{\mathop{\mathrm{supp}}}    
	
	\newcommand{\re}{\mathop{\mathrm{Re}}}   
	\newcommand{\im}{\mathop{\mathrm{Im}}}   
	\newcommand{\dist}{\mathop{\mathrm{dist}}}  
	\newcommand{\link}{\mathop{\circ\kern-.35em -}}
	\newcommand{\spn}{\mathop{\mathrm{span}}}   
	\newcommand{\ind}{\mathop{\mathrm{ind}}}   
	\newcommand{\rank}{\mathop{\mathrm{rank}}}   
	\newcommand{\Fix}{\mathop{\mathrm{Fix}}}   
	\newcommand{\codim}{\mathop{\mathrm{codim}}}   
	\newcommand{\conv}{\mathop{\mathrm{conv}}}   
	\newcommand{\epsi}{\mbox{$\varepsilon$}}
	\newcommand{\eps}{\mathchoice{\epsi}{\epsi}
		{\mbox{\scriptsize\epsi}}{\mbox{\tiny\epsi}}}
	\newcommand{\cl}{\overline}
	\newcommand{\pa}{\partial}
	\newcommand{\ve}{\varepsilon}
	\newcommand{\zi}{\zeta}
	\newcommand{\Si}{\Sigma}
	\newcommand{\cA}{{\mathcal A}}
	\newcommand{\cG}{{\mathcal G}}
	\newcommand{\cH}{{\mathcal H}}
	\newcommand{\cI}{{\mathcal I}}
	\newcommand{\cJ}{{\mathcal J}}
	\newcommand{\cK}{{\mathcal K}}
	\newcommand{\cL}{{\mathcal L}}
	\newcommand{\cN}{{\mathcal N}}
	\newcommand{\cR}{{\mathcal R}}
	\newcommand{\cS}{{\mathcal S}}
	\newcommand{\cT}{{\mathcal T}}
	\newcommand{\cU}{{\mathcal U}}
	\newcommand{\OM}{\Omega}
	\newcommand{\B}{\bullet}
	\newcommand{\ol}{\overline}
	\newcommand{\ul}{\underline}
	\newcommand{\vp}{\varphi}
	\newcommand{\AC}{\mathop{\mathrm{AC}}}   
	\newcommand{\Lip}{\mathop{\mathrm{Lip}}}   
	\newcommand{\es}{\mathop{\mathrm{esssup}}}   
	\newcommand{\les}{\mathop{\mathrm{les}}}   
	\newcommand{\nid}{\noindent}
	\newcommand{\pzr}{\phi^0_R}
	\newcommand{\pir}{\phi^\infty_R}
	\newcommand{\psr}{\phi^*_R}
	\newcommand{\pow}{\frac{N}{N-1}}
	\newcommand{\ncl}{\mathop{\mathrm{nc-lim}}}   
	\newcommand{\nvl}{\mathop{\mathrm{nv-lim}}}  
	\newcommand{\la}{\lambda}
	\newcommand{\La}{\Lambda}    
	\newcommand{\de}{\delta}    
	\newcommand{\fhi}{\varphi} 
	\newcommand{\ga}{\gamma}    
	\newcommand{\ka}{\kappa}   
	
	\newcommand{\core}{\heartsuit}
	\newcommand{\diam}{\mathrm{diam}}

	\newcommand{\lan}{\langle}
	\newcommand{\ran}{\rangle}
	\newcommand{\tr}{\mathop{\mathrm{tr}}}
	\newcommand{\diag}{\mathop{\mathrm{diag}}}
	\newcommand{\dv}{\mathop{\mathrm{div}}}
	
	\newcommand{\al}{\alpha}
	\newcommand{\be}{\beta}
	\newcommand{\Om}{\Omega}
	\newcommand{\na}{\nabla}
	
	\newcommand{\cC}{\mathcal{C}}
	\newcommand{\cM}{\mathcal{M}}
	\newcommand{\nr}{\Vert}
	\newcommand{\De}{\Delta}
	\newcommand{\cX}{\mathcal{X}}
	\newcommand{\cP}{\mathcal{P}}
	\newcommand{\om}{\omega}
	\newcommand{\si}{\sigma}
	\newcommand{\te}{\theta}
	\newcommand{\Ga}{\Gamma}
	
	\newcommand{\vV}{\mathbf{v}}
	\newcommand{\lbunu}{\ul{m}}
	\newcommand{\ca}{\ul{a}}
	\newcommand{\Vve}{\ul{\varepsilon}}
	\newcommand{\ur}{\ul{r}}
	
	\title[Soap bubbles and optimal quantitative stability]{
		%%%%%%%%%
		%Alexandrov meets Gagliardo and Nirenberg:
		%%%%%%%%%
		Optimal quantitative stability estimates for Alexandrov's Soap Bubble Theorem via Gagliardo-Nirenberg-type interpolation inequalities}
	
	%%%%%%%%%%%%%%%%%%%%%%%
	%    \author{}
	%    \address{Department of Mathematics and Statistics, The University of %Western Australia, 35 Stirling Highway, Crawley, Perth, WA 6009, Australia}
	%    \email{}
	%%%%%%%%%%%%%%%%%%%%%%%%%%%   

	\author{João Gonçalves da Silva}
	\address{Department of Mathematics and Statistics, The University of Western Australia, Crawley, Perth, WA 6009, Australia}
	\email{joao.goncalvesdasilva@research.uwa.edu.au}

	\author{Giorgio Poggesi}
	\address{Discipline of Mathematical Sciences, The University of Adelaide, Adelaide SA 5005, Australia.}
	%%%%%%%%%%%%%%%%%%%%%%
	%	Department of Mathematics and Statistics, The University of Western %Australia, 35 Stirling Highway, Crawley, Perth, WA %6009, Australia
	%
	%\curraddr{...}
	%
	%\email{giorgio.poggesi@uwa.edu.au}
	%%%%%%%%%%%%%%%%%%%%%%%%%%%%%%%%%%%%
	\email{giorgio.poggesi@adelaide.edu.au}

	\date{\today}
	
	% \dedicatory{To }

	\begin{abstract}
		The paper provides optimal quantitative stability estimates for the celebrated Alexandrov's Soap Bubble Theorem within the class of $C^{k,\al}$ domains, for any $k \ge 1$ and $0 < \al \le 1$, by leveraging Gagliardo-Nirenberg-type interpolation inequalities. 
        Optimal estimates of uniform closeness to a ball are established for $L^r$ deviations of the mean curvature from being constant, for any $r\ge 2$ (more generally, for any $r>1$ such that $r\ge (2N-2)/(N+1)$).

        For $r>\frac{N-1}{2}$, the stability profile is linear, thus returning the existing results established in the literature through computations for nearly spherical sets. All the stability estimates for $r\le \frac{N-1}{2}$, for which the profile is not linear, are new; even in the particular case $r=2$ (which has been extensively studied, since it is a case of interest for several critical applications), the sharp stability profile that we obtain is new. Interestingly, we also prove that the (non-linear) profile for $r \le \frac{N-1}{2}$ improves as $k$ becomes larger to such an extent that it becomes formally linear
        %(in any dimension)
        as $k$ goes to $\infty$.
        
        Finally,
        %for any $r$,
        for any $k \ge 1$ and $0< \al \le 1$, we show that our estimates are optimal within the class of $C^{k,\al}$ domains, by providing explicit examples. 
        %
        %%%%%%%%%%%
        %Surprisingly, our methods not only apply to all $L^r$-type deviations ($r>1$) %of the mean curvature, but also recover known sharp stability profiles for the %Soap Bubble Theorem and improve on several others.
		%%%
		%(having mean curvature in $L^2$).
		%%%%%%%%%%%%%%%
	\end{abstract}

	\keywords{Alexandrov's Soap Bubble Theorem,
		%Heintze-Karcher's inequality, Serrin's overdetermined problem, %integral identities,
		symmetry, rigidity, sharp stability, quantitative estimates, almost constant mean curvature}
	\subjclass{Primary 35N25, 53A10, 35B35; Secondary 35A23}

	\maketitle

	\raggedbottom

	\section{Introduction}
	
	\subsection{State of the art and motivation}
	We are interested in quantitative stability for the celebrated Alexandrov's Soap Bubble Theorem. Such a celebrated rigidity result deals with the mean curvature $H$ of the boundary $\Ga$ of a bounded domain $\Om \subset \RR^N$. 
	General quantitative results dealing with bubbling phenomena have been obtained in \cite{CirMag} (for uniform deviation of the mean curvature from being constant, i.e., $\nr H - H_0 \nr_{L^\infty(\Ga)}$, where $H_0$ is some reference constant), in \cite{JN} (for $L^{N-1}$-type deviations, i.e., $\nr H - H_0 \nr_{L^{N-1}(\Ga)}$), and finally in \cite{Pogbubbling} (for $L^{1}$-type deviations\footnote{More in general, for deviations such as $\int_{\Ga} \left( H - H_0 \right)^+ dS_x$.} in any dimension).
	Under assumptions preventing bubbling phenomena (e.g., a uniform sphere condition or isoperimetric-type bounds), various sharp quantitative stability results of proximity to a single ball have been established. In \cite{CV} a sharp stability estimate (of Hausdorff-type closeness) was obtained for the uniform deviation $\nr H - H_0 \nr_{L^\infty(\Ga)}$. Weaker deviations were considered in \cite{KM,MP,MP2,MP3,MP6,FZ}. In particular, for $L^2$-deviations, \cite[Theorem 4.6]{MP2} gives the following
    (sharp) 
    linear stability estimate:
	\begin{equation}\label{eq:MP L2}
		\frac{|\Om \De B_{1/H_0}(z)|}{|B_{1/H_0} (z)|}
        %%%%%%%%%%%%%
        %+ \nr R \, \nu - ( x-z ) \nr_{L^2(\Ga)} 
        %%%%%%%%%%%%%%%%%%%%%%%%
        \le C \, \nr H - H_0 \nr_{L^2(\Ga)} ,
	\end{equation}
	where $z$ is a point in $\Om$, $H_0:=\frac{|\Ga|}{N|\Om|}$, and $B_{1/H_0}(z)$ denotes the ball centered at $z$ with radius $1 / H_0$. As shown in \cite{Pog}, the asymmetry in measure in \eqref{eq:MP L2} can be improved and replaced with a stronger $L^2$-type distance, that is,
    \begin{equation*}
		\left\nr |x-z| - \frac{1}{H_0} \right\nr_{L^2 (\Ga)} \le C \, \nr H - H_0 \nr_{L^2(\Ga)} .
	\end{equation*}
    Moreover, as shown in \cite[Theorem 3.9, Formula (3.10)]{MP6}, linear stability can be obtained even for the (stronger) $L^2$-distance of the Gauss map, that is, the following (sharp) estimate holds true:
	\begin{equation}\label{eq:MP L2 + Pogcones}
		\left\nr |x-z| - \frac{1}{H_0} \right\nr_{L^2 (\Ga)} + \left\nr \frac{1}{H_0} \, \nu - ( x-z ) \right\nr_{L^2(\Ga)} \le C \, \nr H - H_0 \nr_{L^2(\Ga)} ,
	\end{equation}
    where $\nu$ denotes the exterior unit normal to $\Ga$.
	%%%%%%%%%
	%	The results in \eqref{eq:MP L2}-\eqref{eq:MP L2 + Pogcones} are contained in \cite{MP2, MP6, Pog}.
	%%%%%%%%%%%%%%%%
    
	If on the left-hand side, we want to measure the closeness to a ball with a uniform norm, then, at least in high dimensions, we have to either pay something in the stability profile\footnote{As we are going to show, in this case we cannot expect to obtain a linear profile for $N \ge 5$.} or strengthen the $L^2$ measure of the deviation on the right-hand side. On the one hand, \cite[Theorem 3.9]{MP6} established the following stability result for uniformly $C^{2,\al}$ domains:
	\begin{equation}\label{eq: MP MinE result}
		\rho_e(z) - \rho_i(z) \le C \, 
		\begin{cases}
			\nr H - H_0 \nr_{L^2(\Ga)} \quad & \text{ for } N=2,3 ,
			\\
			\nr H - H_0 \nr_{L^2(\Ga)} \max \left[ \log\left( \frac{1}{\nr H - H_0 \nr_{L^2(\Ga)}} \right) , 1 \right] \quad & \text{ for } N= 4 ,
			\\
			\nr H - H_0 \nr_{L^2(\Ga)}^{4/N} \quad & \text{ for } N\ge 5 ,
		\end{cases}
	\end{equation}
	where $z$ is a point in $\Om$, and we have set 
	$$
	\rho_e(z) := \max_{x\in\Ga} |x-z| \quad \text{and} \quad \rho_i(z) := \max_{x\in\Ga} |x-z|.
	$$
	%%%%%%%%%%%%%
    %\footnote{The result previously established in \cite[Theorem 3.5]{MP3} (which was %obtained leveraging the previous analysis carried out in \cite{MP, MP2}), shows %that \eqref{eq: MP MinE result} remains valid for uniformly $C^{1,\al}$ domains, %at the cost of replacing the exponent $4/N$ with $2/(N-2)$.}
	%%%%%%%%%%%%%%%%%%%%%%%%%%%%%%%%%%%%%%
    On the other hand, in light of the computations for nearly spherical sets in \cite[Theorem 1.10]{KM} (see also \cite{FZ}), we know that linear stability estimates for the uniform closeness $\rho_e (z)-\rho_i (z)$ remain valid provided that we use deviations $\nr H - H_0 \nr_{L^r (\Ga)}$ with $r > (N-1)/2$ if $N \ge 5$, and $r\ge2$ if $N\le 4$.
    
	In the present paper, we establish the sharp stability profile concerning the uniform closeness $\rho_e (z)-\rho_i (z)$ and $\nr H - H_0 \nr_{L^r (\Ga)}$, for any $r \ge 2$ (more generally, for any $r > 1$ such that $r \ge (2N-2)/(N+1)$); for any $k\ge 1$ and $0<\al\le 1$, our estimates are established for general uniformly $C^{k,\al}$ domains, without requiring further assumptions.
    The sharp general stability profile is presented in Theorem \ref{thm:final stability theorem} below. 
    For $r>\frac{N-1}{2}$ the stability profile is linear, thus returning the existing results established in the literature through computations for nearly spherical sets (\cite{KM,FZ}). Surprisingly, all the stability estimates for $r\le \frac{N-1}{2}$, for which the profile is not linear, are new. Interestingly, we also prove that the (non-linear) profile for $r \le \frac{N-1}{2}$ improves as $k$ becomes larger to such an extent that it becomes formally linear (in any dimension) as $k$ goes to $\infty$.

    %%%%%%%%%%%%%%%   
    %generalise and sharpen \eqref{eq: MP MinE result} to $L^r$-type ($r>1$) deviations %of the mean curvature and $C^{k,\al}$ domains for any $k \ge 1$ and any $0< \al %\le 1$, and we discuss the optimality of all these estimates.
	%In fact, we show that for $N \ge 4$ the stability profile can be improved as $k$ %becomes larger to such an extent that it becomes formally linear (in any %dimension) as $k$ goes to $\infty$. Moreover, we also show that all the estimates %obtained here are optimal for any $k \ge 1$ and $0< \al \le 1$.
%%%%%%%%%%%%%%%%%%%%%%%%%%%%%%%%%%%%%%%%%%%%

    In the present paper, we also show that the stability estimates obtained here are optimal for any $k \ge 1$ and $0< \al \le 1$.
    We stress that, while the optimality of the linear stability estimates (such as, \eqref{eq:MP L2} and \eqref{eq:MP L2 + Pogcones} for any $N$, and \eqref{eq:final stability} for $r > (N-1)/2 $) can be readily checked by a simple calculation with ellipsoids, a careful analysis is needed to check that the stability exponent $\tau_{k,\al,N,r}$ in \eqref{eq:final stability} is optimal, which we provide in the present paper.

    We stress that the sharp profile we establish in Theorem \ref{thm:final stability theorem} is new even in the particular case $r=2$. In fact, in the particular case where one considers
    %
    %$L^2$-type deviations
    %
    $r=2$ and $C^{1, 1}$ domains, the general result in
    Theorem~\ref{thm:final stability theorem} reduces to
    \begin{equation*}
		\rho_e(z) - \rho_i(z) \le C \, 
		\begin{cases}
			\nr H - H_0 \nr_{L^2(\Ga)} \quad & \text{ for } N=2,3,4 ,
			\\
			\nr H - H_0 \nr_{L^2(\Ga)} \max \left[ \frac{1}{2}\log\left( \frac{1}{\nr H - H_0 \nr_{L^2(\Ga)}} \right) , 1 \right] \quad & \text{ for } N= 5 ,
			\\
			\nr H - H_0 \nr_{L^2(\Ga)}^{4/(N-1)} \quad & \text{ for } N\ge 6 ,
		\end{cases}
	\end{equation*}
    which clearly improves the stability profile in \eqref{eq: MP MinE result} for any $N \ge 4$.
    %%%%%%%%%%%%%%%%%%
    %obtained in~\cite{MP3} and \cite{MP6}. Furthermore, our methods also recover the %stability profiles that are known to be sharp.
    %%%%%%%%%%%%%%%%%%%%%%%%%%%%%%%%
   A key observation that allows such an improvement of the stability profile is to notice that if the deviation $\nr H-H_0\nr_{L^r(\Ga)}$ is small enough -- and we are working within a class of uniformly $C^{k,\al}$ ($k\geq 1$ and $0< \al \le 1$) domains to prevent bubbling phenomena --, then $\Om$ must be nearly spherical. A similar remark was recently noticed in \cite{FZ} by leveraging the elliptic regularity theory for almost minimal hypersurfaces; here, we avoid exploiting the elliptic regularity theory, but instead will explicitly deduce this qualitative information from \eqref{eq:MP L2 + Pogcones}. Such a remark allows to reduce the dimension from $N$ to $N-1$, with consequent improvement of the profile when applying Sobolev embedding and interpolation inequalities.

	%%%%%%%%%%%%%%%%%%%%%%%%%%%%%%%%%%%
	%Variants of the estimates in \cite{KM} for nearly spherical sets can also be found in %\cite{FZ, MPS}.
	%%%%%%%%%%%%%%%%%%%%%%%%%%%%%%%%%	

	\subsection{Main results: optimal quantitative stability estimates for Alexandrov's Soap Bubble Theorem with $L^r$-type deviations}

	\begin{thm}\label{thm:SBT stability}
    Let $N\geq 4$ be an integer, $r\in \left[\frac{2N-2}{N+1},\frac{N-1}{2}\right)$, and $\Omega\subset \RR^{N}$ a bounded domain with boundary $\Ga$ of class $C^{k,\alpha}$, where $k\geq 1$ and $0<\alpha \leq 1$. If $k =1$, then we further assume that $\Ga$ is of class $W^{2,r}$.
    Set $H_0:=|\Ga|/(N | \Om|)$. 
		
		Then, there exists a point $z\in \Om$ such that
		\begin{equation}\label{eq:def rhoe rhoi}
			\rho_e(z) := \max_{x\in\Ga} |x-z| \quad \text{and} \quad \rho_i(z) := \max_{x\in\Ga} |x-z|
		\end{equation}
		satisfy
        \begin{equation}\label{eq:stability SBTGN}
			\rho_e(z) -\rho_i(z) \le C \nr H_0 - H \nr_{L^r(\Ga)}^{\tau_{k,\alpha,N,r}} ,
		\end{equation}
		where
		\begin{equation}\label{eq:stability exponent I}
			\tau_{k,\alpha,N,r} := \frac{k+\alpha}{k+\alpha + \frac{N-1-2r}{r}},
		\end{equation}
		and $C$ is a constant only depending on $N$, $k$, $\alpha$, $r$, the $C^{k,\al}$-regularity of $\Ga$ and the diameter of $\Omega$.
	\end{thm}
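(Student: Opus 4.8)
The plan is to combine the already-available sharp linear $L^2$-stability estimate \eqref{eq:MP L2 + Pogcones} with a Gagliardo--Nirenberg-type interpolation on the sphere. Write $\eta:=\|H_0-H\|_{L^r(\Gamma)}$. Since $\rho_e(z)-\rho_i(z)\le\diam\Omega$ for every $z$, if $\eta$ exceeds a threshold $\delta_0$ depending only on the admissible data then \eqref{eq:stability SBTGN} is trivially true; so I would assume from the start that $\eta<\delta_0$, with $\delta_0$ to be fixed in the course of the argument.

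\emph{Reduction to a nearly spherical set.} The uniform $C^{k,\al}$-regularity of $\Gamma$ together with the bound on $\diam\Omega$ confine $\Omega$ to a class in which bubbling is ruled out, so that for $\eta$ small $\Omega$ is $C^1$-close to some ball $B_R(z)$ (by Alexandrov's rigidity and a compactness argument), and hence satisfies a uniform two-sided ball condition with radius comparable to $R:=1/H_0=N|\Omega|/|\Gamma|$; the sharp estimate \eqref{eq:MP L2 + Pogcones} is then in force. Since $|H|$ is bounded by a constant depending only on the regularity of $\Gamma$, Hölder's inequality gives $\|H_0-H\|_{L^2(\Gamma)}\le C\eta^{r/2}$, so \eqref{eq:MP L2 + Pogcones} yields a point $z\in\Omega$ with $\||x-z|-R\|_{L^2(\Gamma)}+\|R\nu-(x-z)\|_{L^2(\Gamma)}\le C\eta^{r/2}$. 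Representing $\Gamma$ as a radial graph $\{R(1+v(\om))\om:\om\in\SS^{N-1}\}$ over $\pa B_R(z)$, the first term controls $\|v\|_{L^2(\SS^{N-1})}\le C\eta^{r/2}$, while the a priori regularity gives $\|v\|_{C^{k,\al}(\SS^{N-1})}\le M$ with $M$ depending only on the $C^{k,\al}$-regularity of $\Gamma$ and $\diam\Omega$. Interpolating these, $\|v\|_{C^1(\SS^{N-1})}\le\fhi(\eta)$ with $\fhi(\eta)\to0$ as $\eta\to0$, which legitimises the graph description for $\delta_0$ small and makes $v$ genuinely $C^1$-small.

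\emph{Elliptic estimate, interpolation, conclusion.} For this graph I would use the expansion $H_0-H=\frac1{(N-1)R^2}\bigl(\Delta_{\SS^{N-1}}v+(N-1)v\bigr)+\mathcal R(v)$, where the nonlinear remainder obeys $\|\mathcal R(v)\|_{L^r(\SS^{N-1})}\le C\,\|v\|_{C^1(\SS^{N-1})}\bigl(\|v\|_{W^{2,r}(\SS^{N-1})}+1\bigr)$, using the a priori bound $\|v\|_{C^{k,\al}}\le M$ (this is where the $W^{2,r}$-hypothesis is needed when $k=1$, and where the constraint $r\ge(2N-2)/(N+1)$ enters, ensuring all the quadratic errors from the nonlinearity of the mean curvature and from the constraint $H_0=|\Gamma|/(N|\Omega|)$ lie in $L^r(\SS^{N-1})$). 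A suitable choice of centre together with the volume normalisation makes the $\ell=0$ and $\ell=1$ spherical-harmonic components of $v$ quadratically small, and on the complement $\bigoplus_{\ell\ge2}$ the operator $\Delta_{\SS^{N-1}}+(N-1)$ is boundedly invertible from $L^r$ to $W^{2,r}$; combining these and absorbing the small term (for $\delta_0$ small) gives $\|v\|_{W^{2,r}(\SS^{N-1})}\le C\eta$. Finally, since $r\in[(2N-2)/(N+1),(N-1)/2)$, the space $W^{2,r}(\SS^{N-1})$ lies strictly below $L^\infty$ in the Sobolev scale while $C^{k,\al}(\SS^{N-1})$ lies strictly above it, so the Gagliardo--Nirenberg-type interpolation inequality $\|v\|_{L^\infty(\SS^{N-1})}\le C\,\|v\|_{W^{2,r}(\SS^{N-1})}^{1-\te}\,\|v\|_{C^{k,\al}(\SS^{N-1})}^{\te}$ holds with $\te$ forced by scaling, namely $\te=\frac{(N-1-2r)/r}{k+\al+(N-1-2r)/r}$, so that $1-\te=\tau_{k,\al,N,r}$. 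Plugging in $\|v\|_{W^{2,r}}\le C\eta$ and $\|v\|_{C^{k,\al}}\le M$, and recalling $\rho_e(z)-\rho_i(z)=R\bigl(\max_{\SS^{N-1}}v-\min_{\SS^{N-1}}v\bigr)\le 2R\|v\|_{L^\infty(\SS^{N-1})}$, we obtain \eqref{eq:stability SBTGN}.

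\emph{Main obstacle.} I expect the crux to be the elliptic step: making the linearisation rigorous and, above all, checking that the nonlinear remainder $\mathcal R(v)$ — together with the low-frequency contributions forced by the geometric constraints — is genuinely of lower order than the linear $L^r\to W^{2,r}$ estimate when $v$ is known only to be $W^{2,r}$ and a priori $C^{k,\al}$. It is precisely the bookkeeping of integrability exponents in this step that pins down the admissible range of $r$. By contrast, the interpolation inequality is a clean $(N-1)$-dimensional statement which, once the reduction to a nearly spherical set is secured, delivers the exponent $\tau_{k,\al,N,r}$ almost for free.
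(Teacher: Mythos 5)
Your proposal follows the same overall strategy as the paper's proof: (i) reduce to a nearly spherical set, (ii) obtain the elliptic bound $\|\omega\|_{W^{2,r}(\SS^{N-1})}\lesssim\|H-H_0\|_{L^r(\Ga)}$, and (iii) interpolate between $W^{2,r}$ and $C^{k,\al}$ in dimension $N-1$ to reach $L^\infty$; your scaling computation for $\te$ matches the paper's exponent $\tau_{k,\al,N,r}$. Two points need attention. First, in step (i) you derive $L^2$-smallness from \eqref{eq:MP L2 + Pogcones} via H\"older using that $|H|$ is pointwise bounded; but in the hypotheses with $k=1$ and $\al<1$ (so $\Ga\in C^{1,\al}\cap W^{2,r}$ only), $H$ need not belong to $L^\infty(\Ga)$, so for $r<2$ (which is the generic situation here, e.g.\ the entire admissible range when $N=4$) this H\"older step fails. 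The paper avoids this by proving a rough $L^r$ version of \eqref{eq:MP L2 + Pogcones} directly from the torsion-problem integral identity (Lemma~\ref{lem: Generalization of 1.2}), which requires only $\Ga\in C^{1,\al}\cap W^{2,r}$ and no bound on $\|H\|_{L^\infty}$. Second, the elliptic step (ii), which you rightly identify as the crux and only sketch, is not carried out from scratch in the paper either: it is exactly \cite[Theorem~1.6]{FZ}, cited as a black box, and this is where the constraint $r\ge (2N-2)/(N+1)$ enters. Finally, a cosmetic divergence: you interpolate directly on $\SS^{N-1}$, whereas the paper transfers $\omega$ to a Euclidean ball in $\RR^{N-1}$ via stereographic projection (Lemma~\ref{lem:bound of flat W^2,2 Sobolev norm}) so as to apply its self-contained Gagliardo--Nirenberg inequality (Theorem~\ref{thm:GagliardoNirenbergInterpolation}); both variants produce the same exponent, but the paper's route lets it prove the interpolation inequality with explicit constants rather than invoking a fractional GN theorem on a manifold.
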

	\begin{rem}\label{rem:first big remark}
		{\rm 
			\begin{enumerate}[(i)]
				\item Notice that, as $k\to \infty$,~\eqref{eq:stability SBTGN} becomes (formally) linear
				$$\lim_{k\to\infty} \tau_{k,\alpha,N,r} = 1 .$$
				\item The stability exponent~\eqref{eq:stability exponent I} is continuous with respect to $k+\alpha$, that is, 
                $$
                \lim_{\alpha\to 0^+}\tau_{k+1,\alpha,N,r}= \tau_{k,1,N,r}
                $$
                %%%%%%%%%%%%%%
                %\quad\text{and}\quad \lim_{\alpha\to 1^-}\tau_{k,\alpha,N,r}= %\tau_{k,1,N,r}.
                %%%%%%%%%%%%%%%%%%%%%%%%%%%%
			\end{enumerate}
		}
	\end{rem}
	
	The next theorem deals with the limit case $r=\frac{N-1}{2}$.
	\begin{thm}[The case $r=\frac{N-1}{2}$]
		\label{thm:SBT log stability case r = (N-1)/2} 
        Let $N\geq 4$, $k \ge 1$ an integer. Let $\Om\subset \RR^N$ be a bounded domain with boundary $\Ga$ of class $C^{k,\al}$, with $0<\al\leq 1$, and set $H_0:=|\Ga|/(N | \Om|)$.
		If $k=1$, further assume that $\Ga$ is of class~$W^{2,\frac{N-1}{2}}$.
		
		Then, there exists a point $z\in \Om$ such that the radii $\rho_e(z)$ and $\rho_i(z)$ defined in \eqref{eq:def rhoe rhoi} satisfy
		\begin{equation}\label{eq:case r=(N-1)/2 log stability SBT via Holder estimates}
			\rho_e(z) -\rho_i(z) \le C \nr H_0 - H \nr_{L^{\frac{N-1}{2}}(\Ga)} \max\left\lbrace  \frac{1}{k+\alpha} \, \log \left(\frac{1}{\nr H_0 - H \nr_{L^{\frac{N-1}{2}}(\Ga)}}\right) , 1 \right\rbrace ,
		\end{equation}
		where $C$ is a constant only depending on $N$, $k$, the $C^{k,\al}$-regularity of $\Ga$ and the diameter of $\Omega$. 
	\end{thm}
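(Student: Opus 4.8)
The plan is to run the same scheme as for Theorem~\ref{thm:SBT stability}, the decisive difference being that at the endpoint exponent $r=\frac{N-1}{2}$ the underlying Gagliardo--Nirenberg inequality on the $(N-1)$-dimensional boundary is borderline, so $W^{2,\frac{N-1}{2}}(\mathbb{S}^{N-1})$ just fails to embed into $L^\infty$, and the resulting logarithmic deficiency produces both the $\log$ and the factor $\frac{1}{k+\al}$ in \eqref{eq:case r=(N-1)/2 log stability SBT via Holder estimates}. First I would dispose of the non-small regime: if $\nr H_0 - H \nr_{L^{\frac{N-1}{2}}(\Ga)}\ge \ve_0$ for a threshold $\ve_0=\ve_0(N,k,\ldots)$ fixed below, then $\rho_e(z)-\rho_i(z)\le \diam(\Om)$ makes \eqref{eq:case r=(N-1)/2 log stability SBT via Holder estimates} trivial with the $1$ inside the maximum, so one may assume the deviation is as small as needed. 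Next I would show $\Om$ is nearly spherical: since $\Ga$ lies in a fixed class of uniformly $C^{k,\al}$ domains, $\nr H_0-H \nr_{L^\infty(\Ga)}$ is bounded, hence by Hölder interpolation $\nr H_0-H \nr_{L^2(\Ga)}\le C\,\nr H_0-H \nr_{L^{\frac{N-1}{2}}(\Ga)}^{\min\{1,(N-1)/4\}}$, which inserted into \eqref{eq:MP L2 + Pogcones} gives $L^2$-closeness of $\Ga$ to a sphere $\partial B_R(z)$ with $R=1/H_0$; upgrading this via the uniform regularity (a Gagliardo--Nirenberg/compactness interpolation against $\nr w \nr_{C^{k,\al}}\le C$) yields a point $z\in\Om$ and a parametrization $\Ga=\{z+(R+w(\xi))\xi:\xi\in\mathbb{S}^{N-1}\}$ with $\nr w \nr_{C^1(\mathbb{S}^{N-1})}$ arbitrarily small, $\nr w \nr_{C^{k,\al}(\mathbb{S}^{N-1})}\le C$, and — choosing $z$ suitably — $w$ orthogonal in $L^2(\mathbb{S}^{N-1})$ to the coordinate functions. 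Here $\rho_e(z)-\rho_i(z)=\mathrm{osc}_{\mathbb{S}^{N-1}}|x-z|\le C\,\nr w-\bar w \nr_{L^\infty(\mathbb{S}^{N-1})}$, with $\bar w$ the mean of $w$; and the extra hypothesis $\Ga\in W^{2,\frac{N-1}{2}}$ when $k=1$ is precisely what guarantees $w\in W^{2,\frac{N-1}{2}}(\mathbb{S}^{N-1})$, which is automatic for $k\ge 2$.

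Second, as in Theorem~\ref{thm:SBT stability}, I would extract quantitative Sobolev control on $w$ from the almost-constancy of the mean curvature. Writing the mean curvature of the graph $R+w$ as $H=H_0+\mathcal L_R w+\mathcal Q(w)$, where $\mathcal L_R=-\tfrac{1}{(N-1)R^2}\big(\Delta_{\mathbb{S}^{N-1}}+(N-1)\big)$ is the rescaled Jacobi operator and $\mathcal Q(w)$ is at least quadratic and affine in $\nabla^2 w$, so that $|\mathcal Q(w)|\le C\big(|w|+|\nabla w|\big)\big(|w|+|\nabla w|+|\nabla^2 w|\big)$, Calderón--Zygmund estimates for $\mathcal L_R$ on the complement of $\ker\mathcal L_R=\mathrm{span}\{1,\xi_1,\ldots,\xi_N\}$ give
\[
\nr w-\bar w \nr_{W^{2,\frac{N-1}{2}}(\mathbb{S}^{N-1})}\le C\Big(\nr H_0-H \nr_{L^{\frac{N-1}{2}}(\Ga)}+\nr \mathcal Q(w) \nr_{L^{\frac{N-1}{2}}(\mathbb{S}^{N-1})}\Big);
\]
since $\nr w \nr_{C^1}$ is small and $W^{2,\frac{N-1}{2}}(\mathbb{S}^{N-1})\hookrightarrow W^{1,N-1}(\mathbb{S}^{N-1})$, a short bootstrap absorbs the remainder and leaves $\nr w-\bar w \nr_{W^{2,\frac{N-1}{2}}(\mathbb{S}^{N-1})}\le C\,\nr H_0-H \nr_{L^{\frac{N-1}{2}}(\Ga)}$. (The constant mode of $\ker\mathcal L_R$ corresponds to the forced choice $R=1/H_0$, and the translational modes to the choice of $z$; their contributions are controlled separately.)

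Third and decisively, I would apply the borderline Gagliardo--Nirenberg interpolation on $\mathbb{S}^{N-1}$, the $r=\frac{N-1}{2}$ endpoint of the inequality used for Theorem~\ref{thm:SBT stability}: for every $f$ of finite $C^{k,\al}$-norm,
\[
\nr f \nr_{L^\infty(\mathbb{S}^{N-1})}\le C\,\nr f \nr_{W^{2,\frac{N-1}{2}}(\mathbb{S}^{N-1})}\,\max\left\{\tfrac{1}{k+\al}\log\Big(\tfrac{\nr f \nr_{C^{k,\al}(\mathbb{S}^{N-1})}}{\nr f \nr_{W^{2,\frac{N-1}{2}}(\mathbb{S}^{N-1})}}\Big),\,1\right\}.
\]
This is proved by a Littlewood--Paley decomposition $f=\sum_j P_j f$: for $j>J$ one uses $\nr P_j f \nr_{L^\infty}\le C2^{-j(k+\al)}\nr f \nr_{C^{k,\al}}$ and sums the geometric series (which contributes a factor $\tfrac{1}{k+\al}$); for $j\le J$ one uses the critical Bernstein bound $\nr P_j f \nr_{L^\infty}\le C\,\nr P_j f \nr_{W^{2,\frac{N-1}{2}}}\le C\,\nr f \nr_{W^{2,\frac{N-1}{2}}}$ and sums $J+1$ equal terms; then one optimises by taking $J\sim\tfrac{1}{k+\al}\log\big(\nr f \nr_{C^{k,\al}}/\nr f \nr_{W^{2,\frac{N-1}{2}}}\big)$, which is exactly where the factor $\tfrac{1}{k+\al}$ enters. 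Applying this with $f=w-\bar w$, using $\nr w-\bar w \nr_{C^{k,\al}(\mathbb{S}^{N-1})}\le C$ together with the bound of the previous step, and recalling $\rho_e(z)-\rho_i(z)\le C\,\nr w-\bar w \nr_{L^\infty(\mathbb{S}^{N-1})}$, yields \eqref{eq:case r=(N-1)/2 log stability SBT via Holder estimates} (the constant then depends only on $N,k$ and the quantities listed, since all $\al$-dependence is captured by the explicit $\tfrac{1}{k+\al}$).

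The main obstacle I expect is the second step: performing the linearisation of the mean curvature with sharp control of the nonlinear remainder in $L^{\frac{N-1}{2}}$, handling the finite-dimensional kernel of $\mathcal L_R$ cleanly, and running the bootstrap so that the estimate is uniform over the whole class of $C^{k,\al}$ domains — this is delicate precisely when $k+\al$ is close to $1$. By comparison, the borderline interpolation of the third step, while it must be set up carefully on the manifold $\mathbb{S}^{N-1}$ with a Hölder high-regularity endpoint, is essentially classical (of Brezis--Gallouet--Wainger type), and the reductions of the first step are routine; a useful sanity check throughout is that the exponents must match the limit $\tau_{k,\al,N,r}\to 1$ of Theorem~\ref{thm:SBT stability} as $r\uparrow\frac{N-1}{2}$.
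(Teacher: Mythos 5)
Your proposal is correct and follows the same three-stage architecture as the paper: (i) reduce to a nearly spherical graph $\Gamma=\{(1/H_0+\omega(\xi))\xi\}$ once the deviation is small enough (and dispose of the non-small regime by the diameter bound); (ii) obtain $\|\omega\|_{W^{2,\frac{N-1}{2}}(\mathbb{S}^{N-1})}\lesssim\|H-H_0\|_{L^{\frac{N-1}{2}}(\Gamma)}$; (iii) pass from this borderline Sobolev control to $L^\infty$ via a log-interpolation against the bounded $C^{k,\alpha}$ norm. The differences lie in the tools you choose for steps (ii) and (iii). For the elliptic estimate in step (ii) you sketch a from-scratch argument (expansion $H=H_0+\mathcal{L}_R\omega+\mathcal{Q}(\omega)$ with $\mathcal{L}_R$ the Jacobi operator, Calderón--Zygmund theory on the orthogonal complement of $\ker\mathcal{L}_R$, and a bootstrap to absorb the quadratic remainder), whereas the paper simply invokes \cite[Theorem 1.6]{FZ}; your sketch has the right shape, but, as you yourself flag, making the constants uniform over the whole $C^{1,\alpha}$ class near $k+\alpha=1$ and absorbing the remainder uniformly is exactly the delicate content that \cite{FZ} carries out, so citing it is the cleaner move. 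For step (iii) you prove a Brezis--Gallouet--Wainger-type inequality on $\mathbb{S}^{N-1}$ by Littlewood--Paley decomposition, extracting the factor $\frac{1}{k+\alpha}$ from the optimal truncation frequency $J\sim\frac{1}{k+\alpha}\log(\cdot)$; the paper instead transports $\omega$ to a flat ball in $\mathbb{R}^{N-1}$ via stereographic projection (Lemma \ref{lem:bound of flat W^2,2 Sobolev norm}), applies the elementary cone-based log-Sobolev estimate of \cite[Lemmas 2.1, 2.5(ii)]{MP6}, and chains it with the Gagliardo--Nirenberg inequality of Theorem \ref{thm:GagliardoNirenbergInterpolation} applied to $\langle\nabla\omega,\ell\rangle$ with $q=N-1$, $p=\infty$, $s+1=k+\alpha$, so that the factor $\frac{1}{k+\alpha}$ appears as $\frac{1}{s+1-(N-1)/p}$. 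Both routes are valid; your Littlewood--Paley derivation is the more classical device for a BGW inequality and works directly on the manifold, while the paper's cone argument is more elementary, constant-tracked, and avoids any spectral machinery. One small omission to fold into your writeup: when $k=1$ the hypothesis is only $C^{1,\alpha}\cap W^{2,\frac{N-1}{2}}$, so the pointwise curvature manipulations require an approximation argument, which the paper handles via \cite{Antonini}.
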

	
	\begin{rem}
		{\rm
				 Notice that in the case $r=\frac{N-1}{2}$, we still have that \eqref{eq:case r=(N-1)/2 log stability SBT via Holder estimates} becomes (formally) linear as $k\to\infty$, being as
				\begin{equation*}
					\lim_{k\to\infty} \max\left\lbrace  \frac{1}{k+\alpha} \, \log \left(\frac{1}{\nr H_0 - H \nr_{L^{\frac{N-1}{2}}(\Ga)}}\right) , 1 \right\rbrace = 1.
				\end{equation*}
		} 
	\end{rem}	
   % We would like to point out that with the aid of the two previous theorems and the results in \cite{KM}, we obtain a complete picture of the stability profile for Aleksandrov's soap bubble Theorem. This is the topic of the following remark.

    %\begin{rem}\label{Rem: SBT complete stability}
    %By combining the two previous results, where we obtain a stability profile for $L^r$-type deviations in the cases $r<\frac{N-1}{2}$ and $r=\frac{N-1}{2}$, with the results in \cite{KM}, we obtain a complete picture of the stability profile for Alexandrov's Soap Bubble Theorem. 
    %\end{rem}
        
    With the two previous results, we are now able to present a complete stability profile. This is summarised in the following theorem.

        \begin{thm}\label{thm:final stability theorem}
           Let $N\geq 2$ and $k\geq 1$ be two integers. Let $\Omega\subset\mathbb{R}^N$ be a bounded domain with boundary $\Ga$ of class $C^{k,\alpha}$, with $0< \alpha\leq 1$, and set $H_0=\frac{|\Ga|}{N|\Omega|}$. Let $r>1$ be such that $r\geq \frac{2N-2}{N+1}$. If $k=1$, we further assume that $\Ga$ is of class $W^{2,r}$.
           
           Then, there exists a point $z\in \Omega$ such that the radii $\rho_e(z)$ and $\rho_i(z)$ defined in \eqref{eq:def rhoe rhoi} satisfy
           \begin{equation}\label{eq:final stability}
               \rho_e(z)-\rho_i(z) \leq C\begin{cases}
                    \nr H - H_0 \nr_{L^r(\Ga)} \quad & \text{ if } r>\frac{N-1}{2} \\
                    \nr H_0 - H \nr_{L^{r}(\Ga)} \max\left\lbrace  \frac{1}{k+\alpha} \, \log \left(\frac{1}{\nr H_0 - H \nr_{L^{r}(\Ga)}}\right) , 1 \right\rbrace \quad & \text{ if } r = \frac{N-1}{2}\\
                    \nr H_0 - H \nr_{L^r(\Ga)}^{\tau_{k,\alpha,N,r}} \quad & \text{ if } r < \frac{N-1}{2},
               \end{cases}
           \end{equation}
           where $C$ is a constant only depending on $N$, $k$, $\alpha$, $r$, the diameter of $\Om$ and the $C^{k,\al}$-regularity of $\Ga$, and $\tau_{k,\alpha,N,r}$ is defined in \eqref{eq:stability exponent I}.
        \end{thm}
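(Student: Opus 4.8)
The plan is to split \eqref{eq:final stability} according to the three regimes $r>\frac{N-1}{2}$, $r=\frac{N-1}{2}$, $r<\frac{N-1}{2}$, and to observe that the last two are exactly the content of Theorems~\ref{thm:SBT log stability case r = (N-1)/2} and~\ref{thm:SBT stability}, so that only the linear regime $r>\frac{N-1}{2}$ requires an argument not already stated. First I would check that the three cases exhaust the hypotheses: for $N\in\{2,3\}$ one has $\frac{N-1}{2}\le 1<r$, so only the linear case occurs; for $N\ge 4$, since $\frac{2N-2}{N+1}<\frac{N-1}{2}$ (equivalently $N>3$), the admissible range $r\ge\frac{2N-2}{N+1}$ decomposes into $[\frac{2N-2}{N+1},\frac{N-1}{2})$, $\{\frac{N-1}{2}\}$ and $(\frac{N-1}{2},\infty)$, on which Theorem~\ref{thm:SBT stability}, Theorem~\ref{thm:SBT log stability case r = (N-1)/2} and the linear case respectively apply (the assumption that $\Ga$ be of class $W^{2,r}$ when $k=1$ matches the hypothesis of those theorems). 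Combining the three conclusions then yields \eqref{eq:final stability}. It also costs nothing to assume the deviation is small: since $\Ga$ is uniformly $C^{k,\al}$ and $\Om$ has bounded diameter $d_\Om$, the quantities $H_0$ and $\nr H \nr_{L^\infty(\Ga)}$ are controlled by the data, hence $\nr H - H_0 \nr_{L^r(\Ga)}\le C_0$, while $\rho_e(z)-\rho_i(z)\le d_\Om$ for every $z\in\Om$; so, fixing a threshold $\ve_0>0$, whenever $\nr H - H_0 \nr_{L^r(\Ga)}\ge\ve_0$ the claimed inequality holds after enlarging $C$ in each of the three cases (using $t\max\{\frac{1}{k+\al}\log(1/t),1\}\ge t$ in the logarithmic case). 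Thus we may assume $\nr H - H_0 \nr_{L^r(\Ga)}<\ve_0$ with $\ve_0$ as small as needed.

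The common core is a qualitative reduction to nearly spherical sets. By Hölder's inequality on $\Ga$ (immediate if $r\ge 2$; using $\nr H - H_0 \nr_{L^\infty(\Ga)}\le C_0$ if $r<2$), the deviation $\nr H - H_0 \nr_{L^2(\Ga)}$ is also small, so \eqref{eq:MP L2 + Pogcones} provides a point $z\in\Om$ with $\nr |x-z| - R \nr_{L^2(\Ga)}$ and $\nr R\,\nu - (x-z) \nr_{L^2(\Ga)}$ small, where $R:=1/H_0$. Interpolating this $L^2$ smallness against the uniform $C^{k,\al}$ bounds on $\Ga$ — a Gagliardo--Nirenberg inequality on the $(N-1)$-dimensional hypersurface $\Ga$ — upgrades it to $C^1$-closeness of $\Ga$ to $\partial B_R(z)$, so that for $\ve_0$ small $\Ga$ is a normal graph $\Ga=\{z+(R+u(\omega))\omega:\omega\in\SS^{N-1}\}$ with $\nr u \nr_{C^1(\SS^{N-1})}$ small and $\nr u \nr_{C^{k,\al}(\SS^{N-1})}\le M$, where $M$ and $\ve_0$ depend only on the data. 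Here $\rho_e(z)-\rho_i(z)=\max_{\SS^{N-1}} u-\min_{\SS^{N-1}} u\le 2\nr u \nr_{L^\infty(\SS^{N-1})}$, so everything reduces to estimating $\nr u \nr_{L^\infty(\SS^{N-1})}$ on the $(N-1)$-dimensional manifold $\SS^{N-1}$ — which is the source of the dimensional gain over working directly in $\RR^N$.

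Finally, I would extract the bound for $u$. Expanding the mean curvature of the graph, $H-H_0=\cL_R u+\cN(u)$, where $\cL_R=-\frac{1}{(N-1)R^2}(\Delta_{\SS^{N-1}}+(N-1))$ is (minus) the Jacobi operator of $\partial B_R$, whose kernel is spanned by the first spherical harmonics, and $\cN(u)$ is a remainder with $\nr \cN(u) \nr_{L^r(\Ga)}\le\eta(\nr u \nr_{C^1})\,\nr u \nr_{W^{2,r}(\SS^{N-1})}$ for a modulus $\eta$ with $\eta(t)\to0$ as $t\to0^+$. The term $\nr R\,\nu - (x-z) \nr_{L^2(\Ga)}$ in \eqref{eq:MP L2 + Pogcones} controls the full $H^1(\SS^{N-1})$-norm of $u$, hence its first-harmonic part, so after a small translation of the center one may take $u$ orthogonal to $\ker\cL_R$ in $L^2$; the $L^r$-elliptic estimate for $\cL_R$ on $\SS^{N-1}$ together with absorption of $\cN(u)$ (possible since $\nr u \nr_{C^1}$ is small) then yields the linear bound $\nr u \nr_{W^{2,r}(\SS^{N-1})}\le C\nr H - H_0 \nr_{L^r(\Ga)}$. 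When $r>\frac{N-1}{2}$ one has $W^{2,r}(\SS^{N-1})\hookrightarrow C^0(\SS^{N-1})$, so $\rho_e(z)-\rho_i(z)\le 2\nr u \nr_{L^\infty(\SS^{N-1})}\le C\nr H - H_0 \nr_{L^r(\Ga)}$ (alternatively, this regime can be quoted from the nearly spherical set computations of \cite{KM,FZ} applied after the reduction above); when $r\le\frac{N-1}{2}$ the embedding fails, and $\nr u \nr_{L^\infty}$ is instead obtained by Gagliardo--Nirenberg interpolation between $W^{2,r}(\SS^{N-1})$ and $C^{k,\al}(\SS^{N-1})$, producing the logarithmic correction when $r=\frac{N-1}{2}$ and the exponent $\tau_{k,\al,N,r}$ of \eqref{eq:stability exponent I} when $r<\frac{N-1}{2}$ — i.e. precisely Theorems~\ref{thm:SBT log stability case r = (N-1)/2} and~\ref{thm:SBT stability}. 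The main obstacle is thus the linear $W^{2,r}$ bound with $\cN(u)$ and $\ker\cL_R$ controlled uniformly over the $C^{k,\al}$ class; but this is the $\tau=1$ endpoint of the mechanism already resolved in the proof of Theorem~\ref{thm:SBT stability}, so for the present statement the difficulty is inherited rather than new, and what remains is the case bookkeeping above.
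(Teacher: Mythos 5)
Your proposal follows the paper's proof almost exactly: split according to the three regimes and observe that $r<\tfrac{N-1}{2}$ and $r=\tfrac{N-1}{2}$ are literally Theorems~\ref{thm:SBT stability} and~\ref{thm:SBT log stability case r = (N-1)/2}; for $r>\tfrac{N-1}{2}$, reduce to a nearly spherical graph $u$ over $\SS^{N-1}$, invoke the elliptic estimate $\|u\|_{W^{2,r}(\SS^{N-1})}\le C\|H-H_0\|_{L^r(\Ga)}$ (which the paper simply cites from \cite[Theorem 1.6]{FZ} rather than rederiving via the Jacobi operator, as you sketch, though you also note the citation as an alternative), and conclude by the Sobolev embedding $W^{2,r}(\SS^{N-1})\hookrightarrow L^\infty(\SS^{N-1})$ valid for $r>\tfrac{N-1}{2}$, together with the trivial diameter bound when the deviation is not small.

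One technical slip, however: to reduce to nearly spherical sets you pass from $L^r$ to $L^2$ smallness of $H-H_0$ via H\"older, asserting $\|H-H_0\|_{L^\infty(\Ga)}\le C_0$ from the $C^{k,\al}$ data when $r<2$. That $L^\infty$ bound on $H$ fails for $k=1$, $\al<1$, where the hypothesis only gives $\Ga\in C^{1,\al}\cap W^{2,r}$ with $r<\infty$, hence $H\in L^r$ but not $L^\infty$ in general; and the regime $r<2$ does arise here (e.g. $N=4$, $r\in(\tfrac32,2)$, or $N\in\{2,3\}$, $r\in(1,2)$). The paper avoids this by proving Lemma~\ref{lem: Generalization of 1.2}, an $L^r$-version (with exponent $1/2$) of the spherical-closeness estimate valid for all $r>1$, and then deriving the nearly-spherical reduction (Lemma~\ref{lem:reduction to nearly spherical sets}) directly from the $L^r$ deviation without ever invoking an $L^\infty$ bound on $H$. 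Quoting that lemma instead of the H\"older step closes the gap and brings your argument fully in line with the paper's.
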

        \begin{rem}
            \rm{\begin{enumerate}[(i)]
                \item The sharpness of the exponent $1$ in the linear estimates \big(i.e., for $r> \frac{N-1}{2}$\big) can be readily checked by a simple calculation with ellipsoids.
                \item For $k\geq 1$ and $0<\al\le 1$, the stability exponent $\tau_{k,\al,N,r}$ is sharp, as we prove in Section \ref{Sec: Optimality} and Appendix \ref{sec:Optimality in the rough case}.
                %%%%%%%%%%%%%%%%%%%%%%%%%%%%%%%%
                %\item  Notice that the stability profile is formally continuous in %$r$, in the sense that 
                %$$
                %    \lim_{r \to \left( \frac{N-1}{2} \right)^-} \tau_{k, \al, N, r} = %1 ,
                %$$
                %that is, the limit as $r \to \left(\frac{N-1}{2}\right)^-$ of the %stability exponent for $r < \frac{N-1}{2}$ agrees with the limit as $r %\to \left(\frac{N-1}{2} \right)^+$ of the stability exponent (that is %$1$) for $r>\frac{N-1}{2}$.
                %%%%%%%%%%%%%%%%%%%%%%%%%%%%%%%%
            \end{enumerate}}
        \end{rem}
        
        As already mentioned, all the stability estimates for $r\le \frac{N-1}{2}$, for which the profile is not linear, are new (even in the particular case where one considers $L^2$ deviations of the mean curvature from being constant).
        %%%%%%%%%%%%
        %, the above theorem also improves the stability profile obtained in \cite{MP3} %and \cite{MP6}.
        %%%%%%%%%%%%%%%%%%%%%%%
        The linear estimates (i.e., the case $r>\frac{N-1}{2}$) essentially return
        %%%%
        %aligns with that obtained 
        %%%%
        those in \cite{KM,FZ}. We also point out that in the case $r = \infty$, our stability estimates formally recover the sharp linear estimate in \cite{CV}; more precisely, while \cite{CV} provides a linear estimate where the constant depends on $N$, $|\Ga|$, and the $C^{1,1}$ regularity\footnote{More precisely, the regularity parameter used in \cite{CV} is the radius of the uniform interior and exterior sphere conditions, which is equivalent to the $C^{1,1}$ regularity of $\Ga$ (see, e.g., \cite[Corollary 3.14]{ABMMZ}).}, our result shows that a linear estimate remains valid even if we replace the dependence on the  $C^{1,1}$ regularity with the dependence on the $C^{1,\al}$ regularity.

	\subsection{Organization of the paper}  
	The rest of the paper is organized as follows. 
	
	In Section \ref{sec:Preliminaries} we introduce the setting and recall some basic tools that will be used in the proof of our main results.
	
	Section \ref{sec:Proof of Main Theorems} is devoted to the proofs of Theorems \ref{thm:SBT stability}, \ref{thm:SBT log stability case r = (N-1)/2}, and \ref{thm:final stability theorem}.

    In Section \ref{Sec: Optimality}, we prove the optimality of Theorem \ref{thm:SBT stability} for domains that are at least of class $C^{1,1}$.
	%In Section \ref{Sec: Optimality}, we prove the optimality of Theorems \ref{thm:SBT stability} and \ref{thm:SBT log stability case r = (N-1)/2} for domains that are at least of class $C^2$.
	
	Appendix \ref{sec:Optimality in the rough case} is devoted to proving the optimality of Theorem~\ref{thm:SBT stability} for $C^{1,\al}$ domains, when $\al<1$. 

    Appendices \ref{Proof of lemma of generalized 1.2} and \ref{Appendix: Proof of Lemma bound of Sobolev norms} are devoted to proving two auxiliary results that will be helpful in the proofs of Theorems \ref{thm:SBT stability}, \ref{thm:SBT log stability case r = (N-1)/2}, and \ref{thm:final stability theorem}.

	\section{Preliminaries}\label{sec:Preliminaries}
	We start by recalling the basic definitions of (possibly fractional) Sobolev spaces. Let $\Omega\subset \RR^{N}$ be an open set in $\RR^{N}$, $s\in (0,\infty)$ and $p\in[1,\infty]$. Write $s=m+\sigma$, where $m\in \NN$ and $\sigma \in [0,1)$ (if $m =0$, then we assume that $\sigma \in (0,1)$). \newline
    If $m = 0$ and $p<\infty$, the Sobolev space $W^{s,p}(\Omega)$ is defined as 
    $$
    W^{s,p}(\Omega):= \left\{u\in L^p(\Omega):\, |u|_{W^{s,p}(\Omega)}^p:=\int_{\Omega}\int_{\Omega}\frac{|u(x)-u(y)|^p}{|x-y|^{N+\sigma p}}\,dy\,dx <\infty \right\}, 
    $$
    normed with 
    $$
    \|u\|_{W^{s,p}(\Omega)} := \|u\|_{L^p(\Omega)} + |u|_{W^{s,p}(\Omega)}.
    $$
    If $m = 0$ and $p= \infty$, the Sobolev space $W^{s,\infty}(\Omega)$ is the space of $s-$Hölder continuous functions $C^{s}$, normed with
    $$
    \|u\|_{C^{s}(\Omega)} := \|u\|_{L^{\infty}(\Omega)} + \sup_{\substack{x,y\in \Omega\\ x\neq y}}\frac{|u(x)-u(y)|}{|x-y|^{\sigma}}
    $$
    If $m>0$ and $\sigma = 0$, then the Sobolev space $W^{s,p}(\Omega)$ is the standard integer Sobolev space $W^{m,p}(\Omega)$. \newline
    If $m>0$ and $\sigma \in (0,1)$, the Sobolev space $W^{s,p}(\Omega)$ is defined as 
    $$
    W^{s,p}(\Omega):= \left\{u\in W^{m,p}(\Omega):\, D^m u\in W^{\sigma, p}(\Omega) \right\},
    $$
    normed with 
    $$
    \|u\|_{W^{s,p}(\Omega)} := \|u\|_{W^{m,p}(\Omega)} + |D^m u|_{W^{\sigma,p}(\Omega)}.
    $$
    For an excellent reference on fractional Sobolev spaces see \cite{Hitchikers}.
    \bigskip
    
    We adopt the following definition of a $C^{k,\alpha}$ domain, where $k\ge 0$ is an integer, and $0\le\al\le 1$ (see e.g., \cite[Page 94]{GT}).
    \begin{definit}\label{def:regularity of a domain}
        Given an integer $k\ge 0$, a real number $0\le \al\le 1$ and a bounded domain $\Om \subset \mathbb{R}^N$ ($N\geq 2$) with boundary $\Ga$. We say that $\Ga$ is of class $C^{k,\al}$, if there exist positive constants $\tilde{C}$ and $\tilde{r}$ such that for any $x\in \Ga$, there exists a one-to-one map 
            \begin{equation*}
                \psi_{x}: B_{\tilde{r}}(x) \rightarrow D \subset \mathbb{R}^N,  
            \end{equation*}
            such that 
            \begin{align*}
                &\psi_{x}(B_{r}(x)\cap \Om) \subset \mathbb{R}_{+}^N , \quad \psi_{x}(B_{r}(x)\cap \Ga) \subset \partial\mathbb{R}_{+}^N,\\
                &\|\psi_x\|_{C^{k,\al}\left(B_{r}(x)\right)}\le \tilde{C} \quad \text{and}\quad \|\psi_{x}^{-1}\|_{C^{k,\al}(D)}\leq \tilde{C}.
            \end{align*}
            Here above, $B_{\tilde{r}}(x)$ denotes the $N-$dimensional euclidean ball centered at $x\in \mathbb{R}^N$ with radius $\tilde{r}$. 

            In particular, for every $x\in \Ga$, $B_{\tilde{r}}(x)\cap \Ga$ can be written as the graph of a $C^{k,\al}$ function of $N-1$ variables, whose $C^{k,\al}$ norm is bounded above by $\tilde{C}$. The converse is also true if $k\ge 1$.
    \end{definit}
     
    Throughout the paper, when we say that a constant depends on the $C^{k,\al}$ regularity of $\Ga$, it means that it can be estimated in terms of the constants $\tilde{C}$ and $\tilde{r}$ in Definition \ref{def:regularity of a domain}.

    \bigskip
    
    As mentioned in the Introduction, one of the steps in the proofs of Theorems \ref{thm:SBT stability}, \ref{thm:SBT log stability case r = (N-1)/2} and \ref{thm:final stability theorem} is the observation that if the deviation $\nr H-H_0\nr_{L^r(\Ga)}$ is small enough and $\Ga$ is of class $C^{k,\alpha}$ ($k\geq 1$ and $0<\alpha\leq 1$), then $\Om$ must be nearly spherical. This is done in Lemma \ref{lem:reduction to nearly spherical sets} below and the key to the proof of Lemma \ref{lem:reduction to nearly spherical sets} is the combination of the regularity properties of $\Ga$ with the following Lemma which provides a more general but rougher version of \eqref{eq:MP L2 + Pogcones} which deals with $L^r$ deviations of the mean curvature.
    
	\begin{lem}\label{lem: Generalization of 1.2}
    Let $r\in (1,\infty)$ and $\alpha\in (0,1]$, and let $\Omega\subset \mathbb{R}^N$ be a bounded domain with boundary $\Ga$ of class $C^{1,\alpha}\cap W^{2,r}$. Let $z\in \mathbb{R}^N$ be the center of mass of $\Om$, that is, 
    $$
    z = \frac{1}{|\Om|}\int_{\Om} x \,dx.
    $$
    Then
		\begin{equation}\label{eq:rough version of 1.2}
			\left\||x-z|-\frac{1}{H_0}\right\|_{L^2(\Ga)}+\left\|\frac{\nu}{H_0}-(x-z)\right\|_{L^2(\Gamma)}\leq C\left\|H-H_0\right\|_{L^r(\Gamma)}^{1/2},
		\end{equation}
        where $\nu$ denotes the exterior unit normal to $\Gamma$ at $x$, and $C$ is a positive constant only depending on $N$, $r$, the diameter of $\Om$ and the $C^{1,\alpha}$ regularity of $\Ga$.
	\end{lem}
     For a proof of Lemma \ref{lem: Generalization of 1.2} we refer to Appendix \ref{Proof of lemma of generalized 1.2}.
     
    Note that, for $r\geq 2$, in the right-hand side of \eqref{eq:rough version of 1.2}, we can replace $\left\|H-H_0\right\|_{L^r(\Gamma)}^{1/2}$ with $\left\|H-H_0\right\|_{L^r(\Gamma)}$, simply by applying Hölder's inequality to \eqref{eq:MP L2 + Pogcones}. 
    We stress that while \eqref{eq:MP L2 + Pogcones} is a sharp linear estimate for $r=2$, here we decided to present a rougher estimate (with exponent $1/2$) which holds for any $r \in (1,\infty)$; the (non-sharp) exponent here is not an issue for our aim, as this estimate will be used only to obtain a qualitative information which allows to restrict the analysis to nearly spherical sets, whereas the sharp rate of stability will be established later in the proofs of the main theorems.
    %%%%%%%%%%%%%%
    %As this Lemma will be applied only to obtain a qualitative stability, we do not %require a sharp estimate. 
 %%%%%%%%%%%%%%%%%%%%%%%%%%%%%%%%%%%%%%%%%%%%%%%%

 The next result we need is a Gagliardo-Nirenberg-type interpolation inequality that will play a crucial role in the proofs of Theorems \ref{thm:SBT stability}, \ref{thm:SBT log stability case r = (N-1)/2}, and \ref{thm:final stability theorem}.
\begin{thm}[Theorem 1, \cite{BrezisMironescu}]\label{thm:GagliardoNirenbergInterpolation}
		Let $D\subset \RR^d$ ($d\geq 1$) be a bounded Lipschitz domain. 
		Let $p>1$, $sp>d$, and $1\le q \le p$. Then there exists a constant $C$ only depending on $d$, $s$, $p$, $q$, $D$, such that for any $v\in W^{s,p}(D)$,
		\begin{equation}\label{eq:GagliardoNirenbergInterpolation}
			\nr v \nr_{L^\infty(D)} \le C \nr v \nr_{W^{s,p}(D)}^{1-\te} \nr v \nr_{L^q(D)}^{\te},
		\end{equation}
		where
		\begin{equation}\label{eq:teta in GN interpolation}
			\te:= \frac{s-\frac{d}{p}}{ s-\frac{d}{p}+\frac{d}{q}}.
		\end{equation}
	\end{thm}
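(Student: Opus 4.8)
The final statement is Theorem~\ref{thm:GagliardoNirenbergInterpolation}, a Gagliardo--Nirenberg-type interpolation inequality on cone-domains.

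\medskip

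\textbf{Proof proposal.}

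The plan is to prove the inequality
$$
\nr v \nr_{L^\infty(\Om)} \le C \nr v \nr_{W^{s,p}(\Om)}^{1-\te} \nr v \nr_{L^q(\Om)}^{\te},
\qquad \te = \frac{s-\frac Np}{s-\frac Np + \frac Nq},
$$
by combining a scaling analysis with the plain Morrey embedding $W^{s,p}\hookrightarrow L^\infty$, which holds for $sp>N$ on cone domains (this is standard; see \cite{AdamsFournier}, and for the fractional case \cite{Hitchikers}). First I would record the unscaled embedding: there is a constant $C_0=C_0(N,s,p,\ca,\ga)$ with $\nr v\nr_{L^\infty(\Om)}\le C_0\nr v\nr_{W^{s,p}(\Om)}$ for every $v\in W^{s,p}(\Om)$. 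The point of the interpolated form is to trade part of the $W^{s,p}$-norm for the weaker $L^q$-norm; the exponent $\te$ in \eqref{eq:teta in GN interpolation} is exactly the one dictated by dimensional analysis, since under the rescaling $v_\la(x):=v(\la x)$ one has $\nr v_\la\nr_{L^\infty}=\nr v\nr_{L^\infty}$, $\nr v_\la\nr_{L^q}\sim\la^{-N/q}\nr v\nr_{L^q}$, and the leading (highest-order) part of $\nr v_\la\nr_{W^{s,p}}$ scales like $\la^{s-N/p}$.

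The cleanest rigorous route, which avoids having to rescale the domain $\Om$ itself, is to localize. Since $\Om$ satisfies the $(\ca,\ga)$-interior cone condition, every $x_0\in\ol\Om$ is the vertex of a cone $\cC_{x_0}\subset\Om$ congruent to $\cC$. For a parameter $\la\in(0,1]$ let $\cC_{x_0,\la}$ be the sub-cone of $\cC_{x_0}$ with the same vertex and opening $\ga$ but radius $\la\ca$; this is congruent to a fixed cone $\cC_\la$ obtained from $\cC$ by the dilation of factor $\la$. Applying the unscaled Morrey embedding on $\cC_{x_0,\la}$, then changing variables $y=\la z$ to pull $\cC_{x_0,\la}$ back to a cone congruent to $\cC$, one gets, for a constant $C_1=C_1(N,s,p,q,\ca,\ga)$ and every $v\in W^{s,p}(\Om)$,
$$
|v(x_0)| \;\le\; C_1\Big(\la^{\,s-\frac Np}\,\nr v\nr_{W^{s,p}(\Om)} \;+\; \la^{-\frac Nq}\,\nr v\nr_{L^q(\Om)}\Big),
$$
valid for all $\la\in(0,1]$. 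Here one uses that, under the dilation by $\la\le 1$, each seminorm $|D^j v|$ of order $j$ acquires a factor $\la^{j-N/p}\le\la^{s-N/p}$ (for $0\le j\le s$, counting the fractional part of $s$ the same way via the Gagliardo seminorm, which scales like $\la^{\sigma-N/p}$ on the order-$m$ derivatives), so the full $W^{s,p}$-norm of the rescaled function is bounded by $C\la^{s-N/p}\nr v\nr_{W^{s,p}(\Om)}$ once $\la\le1$; and the $L^q$-term picks up exactly $\la^{-N/q}$. Then I would optimize in $\la$: ignoring the constraint $\la\le1$, the right-hand side is minimized (up to a constant) by
$$
\la_* = \left(\frac{\nr v\nr_{L^q(\Om)}}{\nr v\nr_{W^{s,p}(\Om)}}\right)^{\!\frac{1}{\,s-\frac Np+\frac Nq\,}},
$$
which yields precisely $|v(x_0)|\le C\,\nr v\nr_{W^{s,p}(\Om)}^{1-\te}\nr v\nr_{L^q(\Om)}^{\te}$ with $\te$ as in \eqref{eq:teta in GN interpolation}; since $q\le p$ and $\nr\cdot\nr_{L^q(\Om)}\le C\nr\cdot\nr_{W^{s,p}(\Om)}$ on the bounded domain $\Om$ (using $q\le p$ and finite measure, or directly the embedding), we have $\la_*\le C$, so after possibly shrinking by a fixed constant the admissible choice $\la=\min\{\la_*,1\}$ still produces the stated bound. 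Taking the supremum over $x_0\in\ol\Om$ gives \eqref{eq:GagliardoNirenbergInterpolation}.

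\medskip

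The main obstacle is the scaling behavior of the fractional part of the $W^{s,p}$-norm and making sure the localized Morrey constant is genuinely uniform: one must check that the Gagliardo seminorm $|D^m v|_{W^{\sigma,p}}$ transforms correctly under dilations (it scales like $\la^{\sigma - N/p}$ after the change of variables, which is $\le \la^{s-N/p}$ for $\la\le1$ since $\sigma+m=s\ge m$), and that all cones $\cC_{x_0,\la}$ are congruent to the single model cone $\cC_\la$, so that the embedding constant does not depend on $x_0$. Everything else — the optimization in $\la$ and the bookkeeping of exponents — is routine. A cosmetic alternative, if one prefers to avoid localization, is to note that any bounded $(\ca,\ga)$-cone domain can be covered by finitely many such cones and to argue cone-by-cone; the estimate above already does this pointwise.
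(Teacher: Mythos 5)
Your plan---localize to a small cone at each $x_0$, rescale to the model cone, apply Morrey there, then optimize over the scale $\la$---is the right idea at a high level, and the exponent bookkeeping in the final optimization is correct. However, the derivation of the key intermediate inequality
\begin{equation*}
|v(x_0)| \;\le\; C_1\Big(\la^{\,s-\frac Np}\,\nr v\nr_{W^{s,p}(\Om)} \;+\; \la^{-\frac Nq}\,\nr v\nr_{L^q(\Om)}\Big)
\end{equation*}
contains two genuine gaps, and as written this inequality does \emph{not} follow from the plain Morrey embedding plus a change of variables.

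First, the scaling claim ``$\la^{j-N/p}\le \la^{s-N/p}$ for $0\le j\le s$, $\la\le 1$'' is in the wrong direction: since $j\le s$ and $\la\le 1$, in fact $\la^{j-N/p}\ge \la^{s-N/p}$. The \emph{lowest}-order term ($j=0$) in $\nr w\nr_{W^{s,p}}$ after rescaling picks up the factor $\la^{-N/p}$, which dominates and blows up as $\la\to 0$. So rescaling the plain embedding $\nr w\nr_{L^\infty}\le C_0 \nr w\nr_{W^{s,p}}$ only yields $|v(x_0)|\le C_0\,\la^{-N/p}\nr v\nr_{W^{s,p}(\Om)}$, which is useless for the optimization you want to perform. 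To make the intermediate derivatives scale like the top-order one, you need an additional ingredient: an interpolation on order of smoothness of the form $\nr D^j v\nr_{L^p}\le K\bigl(\ve\nr D^m v\nr_{L^p}+\ve^{-j/(m-j)}\nr v\nr_{L^p}\bigr)$, with $\ve$ tied to $\la$. The paper does exactly this via \cite[Theorem~5.2]{AdamsFournier} (Theorem~\ref{thm:interpolation on order of smoothness theorem from Adams Fournier}), rather than by a naive rescaling.

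Second, the $L^q$ term on the right-hand side of your intermediate inequality does not come from the Morrey embedding at all: the embedding $\nr w\nr_{L^\infty}\le C_0\nr w\nr_{W^{s,p}}$ has only $W^{s,p}$ (hence $L^p$) data on the right, so rescaling produces $\la^{-N/p}\nr v\nr_{L^p}$, not $\la^{-N/q}\nr v\nr_{L^q}$. Converting an $L^p$ factor to an $L^q$ factor with $q\le p$ requires the further interpolation $\nr v\nr_{L^p}\le \nr v\nr_{L^q}^{q/p}\nr v\nr_{L^\infty}^{1-q/p}$ and an absorption of $\nr v\nr_{L^\infty}$ into the left-hand side; this is precisely the last step in the paper's proof (after \eqref{eq:6 in proof GN}). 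Neither this step nor the interpolation on order of smoothness is ``routine bookkeeping''---together they are essentially the entire content of the theorem, and your argument omits both. The paper instead derives the localized inequality from scratch via a cone representation formula (\eqref{eq:1 in proofGN}), integration by parts, H\"older, and Theorem~\ref{thm:interpolation on order of smoothness theorem from Adams Fournier}, handling the integer and fractional parts of $s$ separately and only then optimizing over the cone radius.
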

     This, in fact, is a particular case of \cite[Theorem 1]{BrezisMironescu}, where much more general inequalities are studied, and we refer to \cite{BrezisMironescu} for a proof of Theorem \ref{thm:GagliardoNirenbergInterpolation}.
     
    In particular, we will make use of the following particular case of Theorem~\ref{thm:GagliardoNirenbergInterpolation} with $p=\infty$, $q=rd/(d-2r)$ and $d\ge 3$.
	
	\begin{cor}\label{cor:application of GN}
		Let $D\subset \RR^d$ ($d\ge 3$) be a bounded Lipschitz domain, and let $s>0$ and $r\in \left[1,\frac{d}{2}\right)$.
		If $v\in W^{s,\infty}(D)$, then
		\begin{equation}\label{eq:application GN to omega}
			\nr v \nr_{L^\infty(D)} \le C \nr v \nr_{W^{s,\infty}(D)}^{1-\te} \nr v \nr_{L^{\frac{rd}{d-2r} }(D)}^{\te},
			\text{ with }
			\te := \frac{s}{s+\frac{d-2r}{r}},
		\end{equation}
		where the constant $C$ depends only on $d$, $s$, $p$, $r$ and $D$.
	\end{cor} 
    
	We will also need the following technical lemma, which gives a bound on the Sobolev norm of a function defined in $\RR^{N-1}$ in terms of the Sobolev norm of the same function transported to the sphere $\SS^{N-1}$ via the stereographic projection. \newline
	We believe this is a well-known result in the literature, but since we did not find a reference containing it, we present it here and provide a detailed proof in Appendix~\ref{Appendix: Proof of Lemma bound of Sobolev norms}.
	
		\begin{lem}\label{lem:bound of flat W^2,2 Sobolev norm}
			Let $N\geq 2$ be an integer and $p\in [1,\infty)$. Then, there exists positive constants $R$ dependent only on $N$ and $C$ dependent only on $N$ and $p$, such that 
		\begin{equation}\label{eq:bound of flat W^2,2 Sobolev norm}
		\|v\circ \iota ^{-1}\|_{W^{2,p}\left(B_{R}^{N-1}(0)\right)}\leq C\|v\|_{W^{2,p}(\SS^{N-1})}, \quad \text{for all}\quad  v\in W^{2,p}(\SS^{N-1}),
        \end{equation}
        where $\iota: \SS^{N-1}\setminus \{P\}\rightarrow \mathbb{R}^{N-1}$ denotes the (standard) stereographic projection from $P\in \SS^{N-1}$, and $B_{R}^{N-1}(0)\subset \RR^{N-1}$ is the $(N-1)$-dimensional euclidean ball centered at the origin with radius $R$. 

        In particular, \eqref{eq:bound of flat W^2,2 Sobolev norm} holds with 
        \begin{equation}\label{eq:explicit radius}
            R= \frac{1}{8(N-1)^2},
        \end{equation}
        and $C = 3 \cdot 2^{\frac{N+5}{p}+\frac{5}{2}}$.
	\end{lem}
	
	\section{Proof of Theorems \ref{thm:SBT stability}, \ref{thm:SBT log stability case r = (N-1)/2} and \ref{thm:final stability theorem}}\label{sec:Proof of Main Theorems}
	Before we proceed with the proofs of Theorems \ref{thm:SBT stability}, \ref{thm:SBT log stability case r = (N-1)/2} and \ref{thm:final stability theorem}, we show that we can restrict our analysis to nearly spherical sets.
	\begin{lem}\label{lem:reduction to nearly spherical sets}
		Let $N\geq 2$ be a integer and $r\in (1,\infty)$. Let $\Omega\subset \mathbb{R}^N$ be a bounded domain with boundary of class $C^{1,\alpha}\cap W^{2,r}$ ($0<\alpha\leq 1$). Set the origin at the center of mass of $\Om$, that is,
        \begin{equation}\label{eq:star001}
            \frac{1}{|\Om|} \int_\Om x \, dx = 0,
        \end{equation}
        which is always true up to a translation.

		Then, there exists a positive constant $\eps_0$ dependent only on $N$, $r$, the diameter of $\Om$ and the $C^{1,\alpha}$-regularity of $\Gamma$ such that if $\|H-H_0\|_{L^r(\Gamma)}< \eps_0$, then there exists $\omega:\SS^{N-1}\rightarrow \RR$ such that
		\begin{equation}\label{eq:conclusion of near spherical}
		\Gamma= \left\{\left(\frac{1}{H_0}+\omega(x)\right)x:\,x\in\SS^{N-1}\right\}.
		\end{equation}
		Furthermore, for any $\eta>0$ there exists $\eps_1>0$ dependent only on $N$, $r$, $\eta$, the diameter of $\Om$ and the $C^{1,\alpha}$-regularity of $\Gamma$ such that, if $\|H-H_0\|_{L^r(\Ga)}<\eps_1$ then 
		\begin{equation}\label{eq:estimate on C1 norm of omega}
			\|\omega\|_{C^{1}(\SS^{N-1})}\leq \eta.\\
		\end{equation}
	\end{lem}
	\begin{proof}
			By Lemma \ref{lem: Generalization of 1.2} and \eqref{eq:star001} we know that there exists a positive constant $C$, dependent on $N$, $r$, the diameter of $\Om$ and the $C^{1,\alpha}$ regularity of $\Gamma$ such that
		\begin{equation}\label{eq:generalized (1.1)}
			\left\||x|-\frac{1}{H_0}\right\|_{L^2(\Ga)}+\left\|\frac{\nu}{H_0}-x\right\|_{L^2(\Gamma)}\leq C\left\|H-H_0\right\|_{L^r(\Gamma)}^{1/2},
		\end{equation}
		where $\nu$ denotes the exterior unit normal to $\Gamma$ at $x$.
				
		Discarding the second term in~\eqref{eq:generalized (1.1)} we have
 		\begin{equation}\label{eq:star002}
 					\int_{\Ga}\left||y|-\frac{1}{H_0}\right|^2\,d\mathcal{H}_{y}^{N-1}\leq C^2 \|H-H_0\|_{L^r(\Ga)}.
 		\end{equation}
 		Let $\delta>0$, and define the measurable sets $S_{\delta}^+,S_{\delta}^{-}\subset \SS^{N-1}$ by setting
 		$$
 		S_{\delta}^+:=\left\{x\in\Ga:\left||y|-\frac{1}{H_0}\right|^2> \delta\right\},\quad \text{and}\quad S_{\delta}^- = \SS^{N-1}\setminus S_{\delta}^+.
 		$$
 		Then, we have that 
 		\begin{equation}\label{eq:estimate on the measure of the worst set}
 		|S_{\delta}^+|\leq \frac{1}{\delta}\int_{\Ga}\left||y|-\frac{1}{H_0}\right|^2\,d\mathcal{H}_{y}^{N-1}\leq \frac{C^2}{\delta} \|H-H_0\|_{L^r(\Ga)},
 		\end{equation}
        where in the last inequality we used \eqref{eq:star002}.
 		
 		Let $y\in S_{\delta}^+$. Since $\Ga$ is of class $C^{1,\alpha}$, making use of Definition \ref{def:regularity of a domain}, we see that there exist $r_1$ dependent only on the $C^{1,\alpha}$ regularity of $\Ga$, such that  
 		$$
 		\left(|w|-\frac{1}{H_0}\right)^2 >\frac{\delta}{2},\quad \text{for all } w\in \left\{z\in \Ga:\, |z-y|<r_1\right\},   
 		$$ 
 		and
 		$$
 		\left| \left\{z\in \Ga:\, |z-y|< r_1\right\}\right|\geq
        C_1 \, r_{1}^{N-1},
 		$$
        where $C_1 = \left|B_{1}^{N-1}\right|$ is the volume of the unit $(N-1)$-dimensional Euclidean ball in $\RR^{N-1}$. The last inequality follows directly from Definition \ref{def:regularity of a domain}, which allows us to write $\left\{z\in \Ga:\, |z-y|< r_1\right\}$ as the graph of a $C^{1,\al}$ function defined on a $(N-1)$-dimensional euclidean ball in $\mathbb{R}^{N-1}$ with radius $r_1$, and
        %%%%%%%%%%%%%%%%%%
        %easily compute its surface area
        %%%%%%%%%%%%%%%%%%
        the formula for the area of a surface (see, e.g., \cite{EG}). \newline
 		But then we conclude that
 		$$
 		|S_{\delta/2}^+| \geq C_1 r_{1}^{N-1}.
 		$$
 		Recalling \eqref{eq:estimate on the measure of the worst set}, we obtain 
 		$$
 		C_1 r_{1}^{N-1}\leq |S_{\delta/2}^+|\leq \frac{2C^2}{\delta}\|H-H_0\|_{L^r(\Ga)}.
 		$$
 		So, setting $\tilde{\eps}_1(\delta) :=  \frac{\delta C_1}{2C^2}r_{1}^{N-1}$, we see that if 
 		$$
 		\|H-H_0\|_{L^r(\Ga)}< \tilde{\eps}_1(\delta)
 		$$
 		then we conclude that $S_{\delta}^+ =\emptyset$.
 		
 		Similarly, we conclude that for any $\epsilon>0$ there exists $\tilde{\eps}_2(\epsilon)>0$ dependent on the $C^{1,\alpha}$ regularity of $\Ga$, $N$, $r$ and $\epsilon$ such that, if $\|H-H_0\|_{L^r(\Ga)}< \tilde{\eps_2}(\epsilon)$, then
 		$$
 		\left\{y\in \Ga: \left|\frac{\nu(y)}{H_0}-y\right|^2> \epsilon\right\} = \emptyset.
 		$$
 		
 		In particular, if we take $\epsilon < \frac{1}{H_0}$ then, if $\|H-H_0\|_{L^r(\Ga)}< \tilde{\eps}_2(\epsilon)$, we have that $\nu(y)\cdot y >0$ for all $y\in\Ga$. This implies that $\Omega$ is star-shaped about the origin, and therefore (up to a translation), we can write 
 		\begin{equation}\label{eq:near spherical}
 			\Ga =\left\{\left(\frac{1}{H_0}+\omega(x)\right)x:\,x\in\SS^{N-1}\right\},
 		\end{equation}
 		for some function $\omega:\SS^{N-1}\rightarrow \RR$ satisfying
        \begin{equation}\label{eq:star0001}
        \omega (x)\geq -\frac{1}{H_0}.
        \end{equation}
        So, taking $\eps_0:=\tilde{\eps}_2\left(\frac{1}{2H_0}\right)$, \eqref{eq:conclusion of near spherical} follows. We note that the dependence of $\eps_0$ on $H_0$ can be removed using Lemma \ref{lem:bound on surface measure Ga} below.\footnote{In fact, by Lemma \ref{lem:bound on surface measure Ga} and the definition of $H_0$, we have that $\ka \le \frac{1}{H_0}$, where $\ka$ is the constant appearing in the statement of Lemma \ref{lem:bound on surface measure Ga}, which only depends on the $C^{1,\al}$ regularity of $\Ga$.}
        In this way, $\eps_0$ depends on $N$, $r$, the diameter of $\Om$ and the $C^{1,\al}$ regularity of $\Ga$.
 		
 		Notice that since $\Ga$ is of class $C^{1,\alpha}$, then $\omega$ is in $C^{1,\alpha}(\SS^{N-1})$ and $\|\omega\|_{C^{1,\alpha}(\SS^{N-1})}$ depends only on the $C^{1,\alpha}$ regularity of $\Ga$.
 		
 		Now we prove the second part of the statement. To that end, let $\sigma>0$ be such that
        %%%%%%%%%%%%%%%%%%
        %\footnote{Notice that \eqref{eq:star0002} is satisfied provided that $\sigma < %< \ka$, where $\ka$ is the constant appearing in the statement of
        %Lemma \ref{lem:bound on surface measure Ga}, which only depends on the %$C^{1,\al}$ regularity of $\Ga$.} 
        %%%%%%%%%%%%%%%%%%%%%%%%%%%%%
        \begin{equation}\label{eq:star0002}
            \sigma<<\frac{1}{H_0},
        \end{equation}
         and let $\eps_1 = \min\{\tilde{\eps}_1(\sigma), \tilde{\eps}_2(\sigma)\}>0$. Then, by the arguments above, if $\|H-H_0\|_{L^r(\Ga)}<\eps_1$, then
 		\begin{equation}\label{eq:emptysets}
 			\left\{y\in \Ga:\, \left||y|-\frac{1}{H_0}\right|^2> \sigma\right\}=\emptyset, \quad \text{and}\quad \left\{y\in \Ga: \left|\frac{\nu(y)}{H_0}-y\right|^2> \sigma\right\} = \emptyset.
 		\end{equation} 
 		Note that, due to the arguments presented above, we know that $\eps_1$ will depend only on the $C^{1,\alpha}$ regularity of $\Ga$, $N$, $r$, and $\sigma$. Furthermore, \eqref{eq:near spherical} holds for some $C^{1,\alpha}$ function $\omega:\SS^{N-1}\rightarrow \RR$.
 		
 		Using \eqref{eq:near spherical}, given $y = \left(\frac{1}{H_0}+\omega(x)\right)x$, the outward unit normal at $y$, $\nu(y)$, can be written as (see \cite{Urbas})
 		\begin{equation}\label{eq:outward unit normal in near spherical}
 			\nu(y) = \nu\left(\left(\frac{1}{H_0}+\omega(x)\right)x\right) = \frac{\left(\frac{1}{H_0}+\omega(x)\right)x-\nabla_{\SS^{N-1}}\omega(x)}{\sqrt{\left(\frac{1}{H_0}+\omega(x)\right)^2+|\nabla_{\SS^{N-1}}\omega(x)|^2}}.
 		\end{equation} 
 		
 		Combining \eqref{eq:near spherical} and \eqref{eq:emptysets} we see that 
 		\begin{equation}
 			\left|\left|\frac{1}{H_0}+\omega(x)\right|-\frac{1}{H_0}\right|\leq \sigma^{1/2}, \quad \text{for all} \, x\in \SS^{N-1},
 		\end{equation}
 		and this implies that 
 		\begin{equation*}
 			|\omega(x)|\leq \sigma^{1/2},\quad \text{for all} \, x\in \SS^{N-1}, \quad \text{or}\quad |\omega(x)+\frac{2}{H_0}|\leq \sigma^{1/2},\quad \text{for all} \, x\in \SS^{N-1}.
 		\end{equation*}
        Combining this with \eqref{eq:star0001} and \eqref{eq:star0002} yields
        \begin{equation}\label{eq:uniform bound for omega}
            |\omega(x)|\leq \sigma^{1/2}, \quad \text{for all} \, x\in \SS^{N-1}.
        \end{equation}
 		
 		Combining \eqref{eq:near spherical}, \eqref{eq:emptysets},\eqref{eq:outward unit normal in near spherical} and \eqref{eq:uniform bound for omega} and the fact that $x\cdot \nabla_{\SS^{N-1}}\omega(x) = 0$ for all $x\in \SS^{N-1}$, we obtain 
 		\begin{equation}
 			\begin{split}
 			 \frac{1}{H_{0}^2}+\left(\frac{1}{H_0}+\omega(x)\right)^2 -\frac{2}{H_0} \frac{\left(\frac{1}{H_0}+\omega(x)\right)^2}{\sqrt{\left(\frac{1}{H_0}+\omega(x)\right)^2+|\nabla_{\SS^{N-1}}\omega(x)|^2}}\leq \sigma.
 			\end{split}
 		\end{equation}
 		This combined with \eqref{eq:uniform bound for omega} leads to 
 		\begin{equation}\label{eq:uniform gradient bound for omega}
 			\left|\nabla_{\SS^{N-1}}\omega(x)\right|\leq C_2 \sigma^{1/4},
 		\end{equation} 		
 		where $C_2$ is a positive constant dependent only on $H_0$.
        As usual, we can remove the dependence on $H_0$ recalling Lemma \ref{lem:bound on surface measure Ga}.
        %%%%%%%%%%%%%%%%%%
        %Then, by applying Lemma \ref{lem:bound on surface measure Ga}, we remove the %dependence of the constant $C_2$ on $H_0$ and replace it by a dependence on %$N$, $r$ and the $C^{1,\al}$ regularity of $\Ga$.  
 		%%%%%%%%%%%%%%%%%%
        
 		Combining \eqref{eq:uniform bound for omega} and \eqref{eq:uniform gradient bound for omega}, \eqref{eq:estimate on C1 norm of omega} easily follows by adjusting $\sigma$.
 		
	\end{proof}
	
	We are now ready for the
	\begin{proof}[Proof of Theorem \ref{thm:SBT stability}]
    We set the origin at the center of mass of $\Om$, that is,
    \begin{equation*}
        \frac{1}{|\Om|} \int_\Om x \, dx = 0,
    \end{equation*}
    which is always true up to a translation.
    	
    Start by assuming that $\|H-H_0\|_{L^r(\Gamma)}<\eps_0$, where $\eps_0$ is given by the previous lemma. 
	Then, there exists $\omega:\SS^{N-1}\rightarrow \RR$ such that 
		$$
		\Gamma = \left\{\left(\frac{1}{H_0}+\omega(x)\right)x:x\in\SS^{N-1}\right\},
		$$
		$\omega\in C^{k,\alpha}(\SS^{N-1})\cap W^{2,r}(\SS^{N-1})$ and $\omega(x)\geq -\frac{1}{H_0}$, for all $x\in \SS^{N-1}$. 
		Then, we have that 
		\begin{equation}\label{eq:bound on diference of radii}
			\rho_e(0)-\rho_i(0)\leq 2 \|\omega\|_{L^{\infty}(\SS^{N-1})}.
		\end{equation}
		Let $x_0\in \SS^{N-1}$ be such that $|\omega(x_0)|=\|\omega\|_{L^{\infty}(\SS^{N-1})}$.
		Assume that $x_0 = (0,...,0,-1)$, which, up to rotations, is true. In this way, we have that $\iota(x_0)=0$, where $\iota$ denotes the standard stereographic projection from $P=(0,...,0,1)$ (see~\eqref{eq:stereographic projection from the north pole} for an explicit formula). Let $R$ be the constant defined in \eqref{eq:explicit radius} (which depends only on $N$). Applying Corollary ~\ref{cor:application of GN} (in $\RR^{N-1}$) with $D = B_R^{N-1}(0)$, $s=k+\alpha$ and $v=\omega\circ \iota^{-1}$ we obtain
	\begin{equation}\label{eq:GN applied to omega}
		\nr \omega\nr_{L^{\infty}(\SS^{N-1})}= \nr \omega\circ \iota^{-1} \nr_{L^\infty\left(B_{R}^{N-1}(0)\right)} \le C_1 \nr \omega\circ \iota^{-1} \nr_{W^{k+\alpha,\infty}\left(B_{R}^{N-1}(0)\right)}^{1-\te} \nr \omega\circ \iota^{-1} \nr_{L^{\frac{r(N-1)}{N-1-2r}}\left(B_{R}^{N-1}(0)\right)}^{\te},
	\end{equation}
	where $C_1$ only depends\footnote{We point out that the dependence of the constant $C_1$ on $D=B_{R}^{N-1}(0)$ can be dropped because $B_{R}^{N-1}(0)$ does not depend on $\Om$, but it is a ball in $\mathbb{R}^{N-1}$ with radius $R$ which only depends on $N$.} on $N$, $k$, $\alpha$ and $r$ and where
	\begin{equation}\label{eq:stability exponent}
		\te := \frac{k+\alpha}{k+\alpha +\frac{N-1-2r}{r}}.
	\end{equation}
	
	By the Sobolev embedding Theorem (see~\cite[Chapter 4]{AdamsFournier}) we have that 
	$$
	 \nr \omega\circ \iota^{-1} \nr_{L^{\frac{r(N-1)}{N-1-2r}}\left(B_{R}^{N-1}(0)\right)}\leq C_2 \left\|\omega\circ \iota^{-1}\right\|_{W^{2,r}\left(B_{R}^{N-1}(0)\right)},
	$$ 
	where $C_2$ is a positive constant dependent only on $N$ and $r$. This leads to
    \begin{equation}\label{eq:bound with W 22 norm}
        \|\omega\|_{L^{\infty}(\SS^{N-1})}\leq  C_3 \nr \omega\circ \iota^{-1} \nr_{W^{k+\al,\infty}\left(B_{R}^{N-1}(0)\right)}^{1-\te} \nr \omega\circ \iota^{-1} \nr_{W^{2,r} \left(B_{R}^{N-1}(0)\right)}^{\te},
    \end{equation}
    where $C_3 = C_{1}C_{2}^{\theta}$. 

    By \cite[Theorem $1.6$]{FZ}\footnote{Note that by \eqref{eq:star001} and \eqref{eq:estimate on C1 norm of omega} in Lemma \ref{lem:reduction to nearly spherical sets} we can apply~\cite[Theorem $1.6$]{FZ}.} we know that there exists a positive constant $C_4$ dependent only $N$, $r$ and the $C^{1,\alpha}$ regularity of $\Gamma$ such that 
        \begin{equation}\label{eq:regularity bound for mean curvature equation}
            \|\omega\|_{W^{2,r}(\SS^{N-1})}\leq C_4\|H-H_0\|_{L^{r}(\Gamma)}.
        \end{equation}

    Combining \eqref{eq:bound on diference of radii}, \eqref{eq:bound with W 22 norm}\footnote{We point out that $\nr \omega\circ \iota^{-1} \nr_{W^{k+\alpha,\infty}\left(B_{R}^{N-1}(0)\right)}$ depends solely on the regularity of $\Ga$. In the case $\al =1$ we point out that since $\omega\circ \iota^{-1}\in C^{k,1}\left(B_{R}^{N-1}(0)\right)$, then we also have $\omega\circ \iota^{-1} \in W^{k+1,\infty}\left(B_{R}^{N-1}(0)\right)$ and the $W^{k+1,\infty}$ norm of $\omega\circ\iota^{-1}$ will depend only on the $C^{k,1}$ norm of $\omega\circ \iota^{-1}$, see \cite{Evans} for additional details.}, Lemma \ref{lem:bound of flat W^2,2 Sobolev norm} (with $v= \om$ and $p=r$) and \eqref{eq:regularity bound for mean curvature equation}, we obtain~\eqref{eq:stability SBTGN}-~\eqref{eq:stability exponent I}, provided that $\nr H-H_0\nr_{L^r(\Gamma)}<~\eps_0$.	
      
    If $\nr H-H_0\nr_{L^r(\Gamma)}\geq\eps_0$, the result trivially follows noting that, for any $z \in \Om$,
    \begin{equation*}
        \rho_e(z)-\rho_i(z)\leq d_{\Omega}=\frac{d_{\Omega}}{\eps_{0}^{\theta}}\eps_{0}^{\theta}\leq \frac{d_{\Omega}}{\eps_{0}^{\theta}}\nr H-H_0\nr_{L^r(\Gamma)}^{\theta},
    \end{equation*}
    where $d_{\Omega}$ is the diameter of $\Omega$.
	\end{proof}

    \bigskip
    
	\begin{proof}[Proof of Theorem~\ref{thm:SBT log stability case r = (N-1)/2}]
    We set the origin at the center of mass of $\Om$, that is,
    \begin{equation*}
        \frac{1}{|\Om|} \int_\Om x \, dx = 0,
    \end{equation*}
    which is always true up to a translation.
    
		Now we assume that $\|H-H_0\|_{L^{\frac{N-1}{2}}(\Gamma)}<\eps_0$, where $\eps_0$ is given by Lemma~\ref{lem:reduction to nearly spherical sets}. 
		Then there exists $\omega:\SS^{N-1}\rightarrow \RR$ such that 
		$$
		\Gamma = \left\{\left(\frac{1}{H_0}+\omega(x)\right)x:x\in\SS^{N-1}\right\},
		$$
		$\omega\in C^{k,\alpha}(\SS^{N-1})\cap W^{2,r}(\SS^{N-1})$ and $\omega(x)\geq -\frac{1}{H_0}$, for all $x\in \SS^{N-1}$. 
        As a consequence, we have that
        \begin{equation}\label{eq:difference of radii}
            \rho_e(0)-\rho_i(0)\leq 2 \|\omega\|_{L^{\infty}(\SS^{N-1})}.
        \end{equation}
        Let $x_{0}\in \SS^{N-1}$ be such that
        \begin{equation*}
            |\omega(x_0)| =  \|\omega\|_{L^{\infty}(\SS^{N-1})}.
        \end{equation*}
        Similarly to what we did in the proof of Theorem \ref{thm:SBT stability}, assume that $x_0= (0,...,0,-1)$, which, up to rotations, is true. In this way, we have $\iota(x_0)=0$, where $\iota$ denotes the standard stereographic projection from $P=(0,...,0,1)$ (see \eqref{eq:stereographic projection from the north pole} for an explicit formula). Let $R$ be the constant defined in \eqref{eq:explicit radius} (which only depends on $N$). 
        Then, applying \cite[Lemma 2.1]{MP6} and \cite[Lemma 2.5 (ii)]{MP6} (in $\mathbb{R}^{N-1}$) with $f=\omega\circ \iota^{-1}$, $p=N-1$, $q=\infty$ and $\mathcal{C}$ a cone in $\mathbb{R}^{N-1}$ with vertex $0\in \mathbb{R}^{N-1}$, opening width $\pi/4$ and height $R/2$ we obtain the following
        \begin{equation*}
        \left|\omega\circ \iota^{-1}(0)-(\omega\circ \iota^{-1})_{\mathcal{C}}\right|\le \frac{R}{N |\cC|^{1/(N-1)}} \, \nr \na \left(\omega\circ \iota^{-1}\right) \nr_{L^{N-1}\left(\mathcal{C}\right)} \log \left( e \,  \frac{ \left|\mathcal{C}\right|^{\frac{1}{N-1}} \nr \na \left(\omega\circ \iota^{-1}\right) \nr_{L^\infty \left(\mathcal{C}\right)}}{\nr \na \left(\omega\circ \iota^{-1}\right) \nr_{L^{N-1}\left(\mathcal{C}\right)}} \right),
        \end{equation*}
        where $(\omega\circ\iota^{-1})_{\mathcal{C}} = \frac{1}{|\mathcal{C}|}\int_{\mathcal{C}}\omega\circ \iota^{-1}(y)\,dy$.

        By the triangle inequality, and using that $\om\circ \iota^{-1}(0) = \om(x_0) = \nr \om \nr_{L^\infty(\SS^{N-1})}$, we thus have that
        \begin{equation*}
            \nr \om \nr_{L^\infty(\SS^{N-1})} \le |(\om\circ \iota^{-1})_\cC| +  \frac{R}{N|\cC|^{1/(N-1)}} \, \nr \na \left(\omega\circ \iota^{-1}\right) \nr_{L^{N-1}\left(\mathcal{C}\right)} \log \left( e \,  \frac{ \left|\mathcal{C}\right|^{\frac{1}{N-1}} \nr \na \left(\omega\circ \iota^{-1}\right) \nr_{L^\infty \left(\mathcal{C}\right)}}{\nr \na \left(\omega\circ \iota^{-1}\right) \nr_{L^{N-1}\left(\mathcal{C}\right)}} \right).
        \end{equation*}
        Using that $\cC \subset B_R^{N-1} (0)$, the monotonicity, for any $A > 0$ and $t > 0$, of the function $t \to t \max\{\log(A/t), 1\}$, and the fact that $(\om\circ \iota^{-1})_\cC \le \frac{1}{|\cC|^{1/(N-1)}} \left\nr  \om\circ \iota^{-1} \right\nr_{L^{N-1}\left(B_{R}^{N-1}(0)\right)}$, we readily obtain
        \begin{equation}\label{eq:1 for proof of r = (N-1)/2}
            \nr \om \nr_{L^\infty(\SS^{N-1})}\leq C_1 \nr\omega\circ \iota^{-1}\nr_{W^{1,N-1}\left(B^{N-1}_R(0)\right)}\max\left\{\log\left(e\,\frac{ \left|\mathcal{C}\right|^{\frac{1}{N-1}} \nr \na \left(\omega\circ \iota^{-1}\right) \nr_{L^\infty \left(B^{N-1}_R(0)\right)}}{\nr\omega\circ \iota^{-1}\nr_{W^{1,N-1}\left(B^{N-1}_R(0)\right)}} \right),1\right\},
        \end{equation}
        where $C_1$ is a positive constant dependent only on $N$ (recall that $|\mathcal{C}|$ depends only on $N$, by construction).
		
        Set $\ell:= \frac{\na \left(\omega\circ \iota^{-1}\right)(x_{M})}{|\na \left(\omega\circ \iota^{-1}\right)(x_M)|}$ where $x_M\in\ol{B_{R}^{N-1}(0)}$ is a point where $|\na \left(\omega\circ \iota^{-1}\right)|$ attains its maximum in $\ol{B_{R}^{N-1}(0)}$; applying Theorem \ref{thm:GagliardoNirenbergInterpolation} (in $\RR^{N-1}$) with $D=B_{R}^{N-1}(0)$, $v= \langle \na \left(\omega \circ \iota^{-1}\right), \ell \rangle$, $p=\infty$ and $q=N-1$ gives that, for any $s>0$,
		\begin{equation*}
			\left\nr \na \left(\omega \circ \iota^{-1}\right) \right\nr_{L^\infty\left(B_{R}^{N-1}(0)\right)} \le C_2 \left\nr \na \left(\omega\circ \iota^{-1}\right) \right\nr_{W^{s,\infty}\left(B_{R}^{N-1}(0)\right)}^{\frac{1}{s+1}} \left\nr \na \left(\omega\circ \iota^{-1}\right) \right\nr_{L^{N-1} \left(B_{R}^{N-1}(0)\right) }^{\frac{s}{s+1}}, 
		\end{equation*}
		and hence
		\begin{equation}\label{eq:2 for proof of case r = (N-1)/2}
			\left\nr \na\left(\omega\circ \iota^{-1}\right) \right\nr_{L^\infty\left(B_{R}^{N-1}(0)\right)} \le C_2 \left\nr  \omega\circ \iota^{-1}\right\nr_{W^{s+1,\infty}\left(B_{R}^{N-1}(0)\right)}^{\frac{1}{s+1}} \left\nr \omega\circ \iota^{-1}\right\nr_{W^{1,N-1} \left(B_{R}^{N-1}(0)\right) }^{\frac{s}{s+1}},
		\end{equation}
		where the constant $C_2$ depends\footnote{As already noticed, we can drop the dependence on $D = B_{R}^{N-1}(0)$.} only on $s$ and $N$. 
		
		Combining \eqref{eq:difference of radii}, \eqref{eq:1 for proof of r = (N-1)/2} and \eqref{eq:2 for proof of case r = (N-1)/2} easily leads to
		\begin{equation}\label{eq:3 for proof of the case r = (N-1)/2}
			\rho_e(0) - \rho_i(0)  \le C_3 \,  \nr \omega\circ \iota^{-1}\nr_{W^{1,N-1}\left(B_{R}^{N-1}(0)\right)}
			\max \left\{ \frac{1}{s+1} \,  \log \left( \frac{ \left\nr \left(\omega\circ \iota^{-1}\right)\right\nr_{W^{s+1,\infty}\left(B_{R}^{N-1}(0)\right)} }{\nr \omega\circ \iota^{-1} \nr_{W^{1,N-1}\left(B_{R}^{N-1}(0)\right)}}  \right) , 1 \right\},
		\end{equation}
		where the constant $C_3$ depends only on $s$ and $N$.
		
		Applying the Sobolev embedding Theorem (see~\cite[Chapter 4]{AdamsFournier}), Lemma~\ref{lem:bound of flat W^2,2 Sobolev norm} and \cite[Theorem $1.6$]{FZ}\footnote{As before, we note that by \eqref{eq:star001} and \eqref{eq:estimate on C1 norm of omega} in Lemma \ref{lem:reduction to nearly spherical sets} we can apply~\cite[Theorem $1.6$]{FZ}.}, we obtain 
		\begin{equation}\label{eq: Sobolev + regularity theory}
			\nr\omega\circ \iota^{-1}\nr_{W^{1,N-1}\left(B_{R}^{N-1}(0)\right)}\leq C_4 \nr \omega\circ \iota^{-1}\nr_{W^{2,\frac{N-1}{2}}(\SS^{N-1})}\leq C_5\nr H-H_0\nr_{L^{\frac{N-1}{2}}(\Gamma)},
		\end{equation}
		where $C_4$ is a positive constant dependent only on $N$, and $C_5$ is a positive constant dependent on $N$ and the $C^{1,\alpha}$ regularity of $\Gamma$.
		
		So, combining~\eqref{eq:3 for proof of the case r = (N-1)/2}, \eqref{eq: Sobolev + regularity theory} and the monotonicity, for any $A>0$ and $t>0$, of the function $t~\mapsto~t \max \left\{\log\left(A/ t \right) , 1 \right\}$, we obtain 
		\begin{equation}\label{eq:prefinal case r=(N-1)/2 proof}
			\rho_e(0) - \rho_i(0)  \le C_6 \,  \nr H_0 - H \nr_{L^{\frac{N-1}{2}}(\Ga)}
			\max \left\{ \frac{1}{ s+1}\, \log \left( \frac{ \left\nr \omega\circ \iota^{-1} \right\nr_{W^{s+1,\infty}\left(B_{R}^{N-1}(0)\right)} }{\nr H_0 - H \nr_{L^{\frac{N-1}{2}}(\Ga)}}  \right) , 1 \right\},	
		\end{equation}
		where $s>0$, and the constant $C_6$ only depends on $s$, $N$ and the $C^{1,\al}$-regularity of $\Ga$.	
        
        Finally, we apply~\eqref{eq:prefinal case r=(N-1)/2 proof} with $s+1= k+\alpha$ to obtain~\eqref{eq:case r=(N-1)/2 log stability SBT via Holder estimates}, provided that $\nr H-H_0\nr_{L^{\frac{N-1}{2}}(\Gamma)}<\eps_0$.\newline
		This finishes the proof under the assumption that $\|H-H_0\|_{L^{\frac{N-1}{2}}(\Ga)}< \eps_0$.
     \bigskip
     
     If $\nr H-H_0\nr_{L^{\frac{N-1}{2}}(\Gamma)}\geq\eps_0$, then it is clear that~\eqref{eq:case r=(N-1)/2 log stability SBT via Holder estimates} still holds since, for any $z\in \Om$,
        $$
        \rho_e(z) - \rho_i(z)\le d_{\Omega} \le \frac{d_{\Omega}}{\eps_0} \|H-H_0\|_{L^{\frac{N-1}{2}}(\Ga)},
        $$
        where $d_{\Omega}$ denotes the diameter of $\Omega$.
	\end{proof}
	
	\bigskip
    We finalise this section with the proof of Theorem \ref{thm:final stability theorem}.
    
	\begin{proof}[Proof of Theorem \ref{thm:final stability theorem}]
	    We start by noting that the only case left to prove is the one where $r>\frac{N-1}{2}$.
        Indeed, if $N\leq 3$ then we have $r>1>\frac{N-1}{2}$. If $N\geq 4$, the case $r\leq\frac{N-1}{2}$ has already been covered by Theorems \ref{thm:SBT stability} and \ref{thm:SBT log stability case r = (N-1)/2}, so that we are left with the case $r>\frac{N-1}{2}$. 

        Now let $r>\frac{N-1}{2}$. Similarly to what we did in the previous proofs, we set the origin at the center of mass of $\Om$, that is,
    \begin{equation*}
        \frac{1}{|\Om|} \int_\Om x \, dx = 0,
    \end{equation*}
    which is always true up to a translation. We also assume that $\|H-H_0\|_{L^r(\Gamma)}<\eps_0$, where $\eps_0$ is given by Lemma \ref{lem:reduction to nearly spherical sets}. 
	Then, there exists $\omega:\SS^{N-1}\rightarrow \RR$ such that 
		$$
		\Gamma = \left\{\left(\frac{1}{H_0}+\omega(x)\right)x:x\in\SS^{N-1}\right\},
		$$
		 $\omega\in C^{k,\alpha}(\SS^{N-1})\cap W^{2,r}(\SS^{N-1})$ and $\omega(x)\geq -\frac{1}{H_0}$.
		Then, we have that 
		\begin{equation}\label{eq:bound on diference of radii 2}
			\rho_e(0)-\rho_i(0)\leq 2 \|\omega\|_{L^{\infty}(\SS^{N-1})}.
		\end{equation}
        Let $x_0\in \SS^{N-1}$ be such that $|\omega(x_0)|=\|\omega\|_{L^{\infty}(\SS^{N-1})}$.
		Assume that $x_0 = (0,...,0,-1)$, which, up to rotations, is true. In this way, we have that $\iota(x_0)=0$, where $\iota$ denotes the standard stereographic projection from $P=(0,...,0,1)$ (see~\eqref{eq:stereographic projection from the north pole} for an explicit formula). Let $R$ be the constant defined in \eqref{eq:explicit radius} (which depends only on $N$).

        Since $r>\frac{N-1}{2}$, we can apply the Sobolev embedding Theorem (see \cite[Chapter 4]{AdamsFournier}) to conclude that there exists a positive constant $C_1$ dependent only on $N$ and $r$, such that 
        \begin{equation*}
            \|\omega\|_{L^{\infty}(\SS^{N-1})} = \left\|\omega\circ \iota^{-1}\right\|_{L^{\infty}\left(B_{R}^{N-1}(0)\right)}\leq C_1 \|\omega\circ \iota^{-1}\|_{W^{2,r}\left(B_{R}^{N-1}(0)\right)}.
        \end{equation*}
        This, combined with \eqref{eq:bound on diference of radii 2}, Lemma \ref{lem:bound of flat W^2,2 Sobolev norm} and \cite[Theorem $1.6$]{FZ}, leads to 
        \begin{equation*}
            \rho_e(0)-\rho_i(0) \leq C_2 \|H-H_0\|_{L^r\left(\Ga\right)},
        \end{equation*}
        where $C_2$ is a positive constant dependent only $N$, $r$ and the $C^{k,\alpha}$ regularity of $\Ga$. 
        This concludes the proof of \eqref{eq:final stability} under the assumption that $\|H-H_0\|_{L^r\left(\Ga\right)}<\eps_{0}$. 

        If $\|H-H_0\|_{L^r\left(\Ga\right)}\geq \eps_{0}$, then \eqref{eq:final stability} still holds since, for any $z\in \Om$,
        $$
        \rho_e(z) - \rho_i(z)\le \frac{d_{\Omega}}{\eps_0} \|H-H_0\|_{L^{r}(\Ga)},
        $$
        where $d_{\Omega}$ denotes the diameter of $\Omega$.
        \end{proof}
        \bigskip
        
	\section{Optimality of Theorem~\ref{thm:SBT stability} with $k+ \alpha \geq 2$}\label{Sec: Optimality}
	In this section, we show the optimality of Theorem~\ref{thm:SBT stability} for $C^{k,\alpha}$ domains, when $k+\alpha \geq 2$ ($0< \al\le 1$).
	Since the estimates to check the optimality of Theorems~\ref{thm:SBT stability} are slightly different and more technical for $C^{1,\alpha}$ ($\alpha\in \left(0,1\right)$) domains, we leave the $C^{1,\alpha}$ case for the Appendix (see Section~\ref{sec:Optimality in the rough case}).
	
	Throughout this section, $B_{r}^{N-1}$ denotes the $(N-1)$-dimensional ball of radius $r$ with center at the origin and orthogonal to $e_{N}=(0,...,0,1)\in\mathbb{R}^N$. Let $N\geq 4$, $k\geq 1$ be nonnegative integers and $\alpha \in (0,1]$ be, such that $k+\alpha \geq 2$ (if $k=1$, the only option is $\al =1$). 
    %If $\alpha =1$, also let $p \in \left(1,\infty\right)$ be such that
	%\begin{equation}\label{assumption on p}
	%	p\geq\max\left\{\frac{2N}{N-4}, \frac{N}{k+2}\right\}.
	%\end{equation}
	
	Define $\Gamma_{t}\subset \mathbb{R}^{N}$ (for $t>0$) by setting 
	\begin{gather*}
		\Gamma_{t}\cap \left(B_1^{N-1}\times (0,+\infty)\right) = \{(x,\varphi_{t}(x)):\,x\in B_1^{N-1}\},\\
		\Gamma_{t}\setminus \left(B_1^{N-1}\times (0,+\infty)\right) =\mathbb{S}^{N-1}\setminus  \left(B_1^{N-1}\times (0,+\infty)\right),
	\end{gather*}
	where $\varphi_{t}:B_{1}^{N-1}\rightarrow (0,\infty)$ is a function of class $C^{k,\alpha}$.\\
	Now we define $\varphi_{t}$. First, let $\varphi_{0}:B_1^{N-1}\rightarrow \mathbb{R}$ be given by
	\begin{equation*}
		\varphi_{0}(x) =\sqrt{1-|x|^2}.
	\end{equation*}
	Take $\psi \in C^{\infty}_c(\mathbb{R}^{N-1})$ to be a nonnegative compactly supported smooth function such that 
	\begin{equation*}
		\text{supp}(\psi) \subset B_{1}^{N-1},\, 0\leq \psi_t\leq 1\, \text{and } \psi_{\vert_{B_{1/2}^{N-1}}}\equiv 1.
	\end{equation*}
	Given $t>0$, we define $\psi_{t}(x):=\psi\left(\frac{x}{t}\right)$. Finally, for $0<t\leq t_1$ (where $t_1:=t_1(k,N)<1$ will be set later), we define $\varphi_{t}:B_1^{N-1}\rightarrow \mathbb{R}$ by setting
	\begin{equation}\label{varphi r0}
		\varphi_{t}(x)= \varphi_{0}(x)+\psi_{t}(x)\sum_{i=1}^{N-1}|x_i|^{k+\alpha}.
	\end{equation} 
	
	Let $\Omega_{t}\subset \mathbb{R}^{N}$ be the only bounded, open, connected set such that $\partial \Omega_{t} = \Gamma_{t}$. Note that, by construction, the regularity of $\Gamma_{t}$ matches the regularity of $\varphi_{t}$, which is clearly $C^{k,\alpha}$. Furthermore, the family of functions $\{\varphi_{t}\}$ is in $C^{k,\alpha}$ but not in $C^{k,\alpha'}$ for any $\alpha'>\alpha$.
    To simplify the presentation, we write $\Psi_{t}(x):= \varphi_{t}(x)-\varphi_{0}(x)$.
	
	To check the optimality of Theorem~\ref{thm:SBT stability} we need some estimates regarding $\Psi_t$, $|\nabla \Psi_t|$ and $|D^2\Psi_t|$.\\
	From now on, we assume that
	$$t_1\leq\frac{1}{10(N-1)(k+1)}.$$
    Note that since $\text{supp}\psi_t\subset B_{t}^{N-1}$ and $0\leq \psi_t\leq 1$, we have 
	\begin{equation}\label{sharp L infty bound}
			|\Psi_t(x)|\leq \sum_{i=1}^{N-1}|x_i|^{k+\al}\leq (N-1) t^{k+\alpha}
	\end{equation}
	Using the fact that $t\leq t_1$, we have the uniform (in $t$) bound
	\begin{equation}\label{L infty bound}
		|\Psi_t(x)|\leq \frac{1}{2}.
	\end{equation}
	Regarding $|\nabla \Psi_t|$ we have
	\begin{equation}\label{sharp gradient estimate}
		\begin{split}
			\left|\nabla \Psi_t(x)\right|&\leq \left|\psi_t(x)\nabla\left(\sum_{i =1}^{N-1}|x_i|^{k+\alpha}\right)+\left(\sum_{i =1}^{N-1}|x_i|^{k+\alpha}\right)\nabla\psi_{t}(x)\right|\\
			&\leq  (k+\alpha)\sum_{i=1}^{N-1}|x_i|^{k+\alpha-1}+\frac{\|\nabla \psi\|_{L^{\infty}(B_{1}^{N-1})}}{t} \sum_{i =1}^{N-1}|x_i|^{k+\alpha}\\
            &\leq (N-1)\left(k+\alpha + \|\nabla \psi\|_{L^{\infty}\left(B_{1}^{N-1}\right)}\right) t^{k+\alpha-1}.
		\end{split}
	\end{equation}
    Here, in the third inequality, we used the fact that $x\in B_{t}^{N-1}$.
	
	The same argument leads to
	\begin{equation}\label{sharp second order estimate}
		\left|D^2 \Psi_k(x)\right|\leq C_1 t^{k+\alpha-2},
	\end{equation}
	with $C_1$ being a positive constant dependent only on $N$, $k$, $\alpha$ and $\|\psi\|_{C^2(\mathbb{R}^{N-1})}$.
	
	From now on, whenever we make use of~\eqref{sharp gradient estimate} or~\eqref{sharp second order estimate} we will omit the dependence on $\|\psi\|_{C^2(\mathbb{R}^{N-1})}$ since it is a fixed quantity that does not change with any of the relevant parameters.\\
	Now let $z_t\in \Omega_{t}$ be such that 
	$$
	\inf_{z\in \Omega_{t}}\left(\rho_e(z)-\rho_i(z)\right) = \rho_e(z_t)-\rho_i(z_t).
	$$
	Simple geometric considerations imply that: 
	$$
	2\rho_e(z_t)\geq d_{\Omega_{t}}\geq 1+\sqrt{|x_t|^2+\psi_{t}(x_t)^2}\,\text{ and }\, \rho_i(z_t)\leq 1,
	$$
	where by $d_{\Omega_t}$ we mean the diameter of $\Omega_t$, and where $x_{t}\in B_{1}^{N-1}$ is such that $\max_{x\in B_{1}^{N-1}}\Psi_{t}(x) = \Psi_{t}(x_{t})$.
	So, the following chain holds:
	\begin{equation*}
		\begin{split}
			2\rho_e(z_t)-2\rho_i(z_t)&\geq 1+\sqrt{|x_t|^2+\psi_{t}(x_t)^2}-2= \sqrt{|x_t|^2+\psi_{t}(x_t)^2}-1\\
			&=\frac{|x_{t}|^2+\varphi_{t}(x_{t})^2 -1}{\sqrt{|x_{t}|^2+\varphi_{t}(x_{t})^2}+1}= \frac{2\varphi_{0}(x_{t})\Psi_{t}(x_{t})+\Psi_{t}(x_{t})^2}{\sqrt{1+2\varphi_{0}(x_{t})\Psi_{t}(x_{t})+\Psi_{t}(x_{t})^2}+1}\\
			&\geq \frac{2}{3} \Psi_{t}(x_{t}),
		\end{split}
	\end{equation*}
	where we used~\eqref{L infty bound} and $\Psi_{t}\geq 0$.
	Hence, recalling~\eqref{varphi r0} and that $\Psi_t = \varphi_t-\varphi_0$, we obtain
	\begin{equation}\label{difference of the radii}
		\rho_e(z_t)-\rho_i(z_t)\geq \frac{2}{3}\frac{t^{k+\alpha}}{2^{k+\alpha}}.
	\end{equation}

	Now, all that is left for us to do is study the behaviour of the $L^2$-deviation of the mean curvature of the sets $\Omega_{t}$ from the reference constant $H_0:= \frac{|\Gamma_t|}{N|\Omega_{t}|}$ as a function of $t$.
	
	To do this, we break $\|H_{t}-H_0\|_{L^r(\Gamma_{t})}$ into three terms:
	\begin{equation}\label{upper bound for mean curvature}
		\begin{split}
			\|H_{t}-H_0\|_{L^r(\Gamma_{t})}&\leq \|H_{t}-1\|_{L^r(\Gamma_{t})} + |\Gamma_{t}|^{\frac{1}{r}}|H_{0}-1|\\
			&\leq \|H_{t}-1\|_{L^r(\Gamma_{t})} + |\Gamma_{t}|^{\frac{1}{r}}\frac{\left||\Gamma_{t}|-\left|\mathbb{S}^{N-1}\right|\right|}{N|\Omega_{t}|}+\frac{|\Gamma_{t}|^{\frac{1}{r}}\left|\mathbb{S}^{N-1}\right|}{N|B_1||\Omega_{t}|}||\Omega_{t}|-|B_1||\\
			&\leq \|H_{t}-1\|_{L^r(\Gamma_{t})} + C_2||\Gamma_{t}|-\left|\mathbb{S}^{N-1}\right||+C_2||\Omega_{t}|-|B_1||,\\
		\end{split}
	\end{equation}
	where $C_2$ is a positive constant dependent on $N$ and $r$, and where we have used the fact that because $t\leq t_{1}(k,N)\leq \frac{1}{10(N-1)(k+1)}$, we have $|\Omega_{t}|\geq \frac{|B_1|}{2}$ and $|\Gamma_{t}|\leq 2|\mathbb{S}^{N-1}|$.
	
	Now, we estimate $\|H_{t}-1\|_{L^r(\Gamma_{t})}$, $\left||\Gamma_{t}|-\left|\mathbb{S}^{N-1}\right|\right|$ and $||\Omega_{t}|-|B_1||$, individually.
	
	First, for the deviation of the measures, using~\eqref{sharp L infty bound} we see that there exists a positive constant $C_9$ dependent only on $N$, such that:
	\begin{equation}\label{Measures}
			\left||\Omega_{t}|-|B_1|\right|=\left|\int_{B_1^{N-1}}\left(\varphi_{t}-\varphi_0\right)dx\right|\leq \int_{B_{t}^{N-1}}|\Psi_{t}(x)|dx \leq \left|\SS^{N-2}\right|\,t^{k+\alpha+N-1}.
	\end{equation}
	
	Second, the deviation of the perimeters
	\begin{equation}\label{Perimeters}
		\begin{split}
			\left||\Gamma_{t}|-\left|\mathbb{S}^{N-1}\right|\right|&=\left|\int_{B_1^{N-1}}\left(\sqrt{1+|\nabla \varphi_{t}|^2}-\sqrt{1+|\nabla\varphi_{0}|^2}\right)\,dx\right|\\
			&=\left|\int_{B_1^{N-1}}\frac{|\nabla \varphi_{t}|^2-|\nabla \varphi_{0}|^2}{\sqrt{1+|\nabla \varphi_{t}|^2}+\sqrt{1+|\nabla \varphi_{0}|^2}}\,dx\right|\leq\int_{B_{1}^{N-1}}\left||\nabla \varphi_{t}|^2-|\nabla \varphi_{0}|^2\right|\,dx\\
			& = \int_{B_1^{N-1}}\left||\nabla \Psi_{t}|^2+2\langle \nabla \Psi_{t},\nabla \varphi_{0}\rangle\right|\,dx\leq \int_{B_{t}^{N-1}}|\nabla \Psi_{t}|^2+2|\nabla \Psi_{t}||\nabla \varphi_{0}|\,dx\\
			&\leq C_{3} t^{k+\alpha+N-1},
		\end{split}
	\end{equation}
	where $C_{3}$ is a positive constant dependent only on $N$, and where we have used~\eqref{sharp gradient estimate} and the fact that $\left|\nabla\varphi_0(x) \right|\leq 2t$ on $B_{t}^{N-1}$.
	
	Before estimating $\|H_{t}-1\|_{L^r(\Gamma_{t})}$ we first give a pointwise estimate on $|H_{t}-1|$.
	For that, note that, by construction, we have $H_{t}(x)=1$ for all $x\in \Gamma_{t}\setminus \left(B_{t}^{N-1}\times (0,+\infty)\right)$. Therefore, to estimate the deviation of the mean curvature from $1$ we only need to look at the mean curvature of the portion of $\Gamma_t$ inside $B_{t}^{N-1}\times (0,+\infty)$. To that end, recall that for the sets under consideration, the mean curvature of the portion of $\Gamma_t$ inside $B_{t}^{N-1}\times (0,+\infty)$ is given by the formula (see, for instance, \cite[Chapter 14]{GT}): 
	\begin{equation*}
		H_t(x,\varphi_{t}(x))=-\frac{\Delta \varphi_{t}(x)}{(N-1)(1+|\nabla \varphi_{t}(x)|^2)^{1/2}}+\frac{\left(\nabla \varphi_{t}(x)\right)^T D^2 \varphi_{t}(x)\nabla \varphi_{t}(x) }{(N-1)(1+|\nabla \varphi_{t}(x)|^2)^{3/2}}.
	\end{equation*}

	Expanding the formula above for the mean curvature, we have (we omit the dependence on the $x-$variable to simplify the presentation)
	
	\begin{equation}\label{Pointwise deviation of the mean curvature}
		\begin{split}
			&(N-1)\left(H_{t}(x,\varphi_{t}(x))-1\right)= -\frac{\Delta \varphi_{0} + \Delta \Psi_{t}}{(1+|D \varphi_{t}|^2)^{1/2}}+\frac{\left(\nabla \varphi_{0}+\nabla \Psi_{t}\right)^T \left(D^2\varphi_{0}+D^2\Psi_{t}\right)\left(\nabla \varphi_{0}+\nabla\Psi_{t}\right) }{(1+|\nabla \varphi_{t}|^2)^{3/2}}\\
			&+\frac{\Delta \varphi_{0}}{(1+|\nabla \varphi_{0}|^2)^{1/2}}-\frac{\nabla\varphi_{0}^T D^2 \varphi_{0} \nabla \varphi_{0}}{(1+|\nabla \varphi_{0}|^2)^{3/2}}\\ 
			&= -\frac{\Delta \varphi_{0} +\Delta \Psi_{t}}{(1+|\nabla \varphi_{t}|^2)^{1/2}}+\frac{\nabla \varphi_{0}^T D^2\varphi_{0}\nabla \varphi_{0}}{(1+|\nabla \varphi_{t}|^2)^{3/2}}+\frac{\nabla \varphi_{0}^T D^2\varphi_{0}\nabla \Psi_{t}}{(1+|\nabla \varphi_{t}|^2)^{3/2}}+\frac{\nabla \varphi_{0}^T D^2\Psi_{t} \nabla\varphi_{0} }{(1+|\nabla \varphi_{t}|^2)^{3/2}}\\
			&+\frac{\nabla\varphi_{0}^T D^2\Psi_{t}\nabla\Psi_{t}}{(1+|\nabla\varphi_{t}|^2)^{3/2}}+\frac{\nabla\Psi_{t}^T D^2\varphi_{0} \nabla\varphi_{0}}{(1+|\nabla \varphi_{t}|^2)^{3/2}}+\frac{\nabla \Psi_{t}^T D^2\varphi_{0}\nabla \Psi_{t}}{(1+|\nabla \varphi_{t}|^2)^{3/2}}+\frac{\nabla \Psi_{t}^T D^2\Psi_{t}\nabla \varphi_{0}}{(1+|\nabla \varphi_{t}|^2)^{3/2}}\\
			&+\frac{\nabla \Psi_{t}^T D^2\Psi_{t}\nabla\Psi_{t}}{(1+|\nabla \varphi_{t}|^2)^{3/2}}+\frac{\Delta \varphi_{0}}{(1+|\nabla \varphi_{0}|^2)^{1/2}}-\frac{\nabla \varphi_{0}^T D^2 \varphi_{0} \nabla\varphi_{0} }{(1+|\nabla \varphi_{0}|^2)^{3/2}}\\
			&=\left((1+|\nabla \varphi_{0}|^2)^{-1/2}-(1+|\nabla \varphi_{t}|^2)^{-1/2} \right)\Delta \varphi_{0}\\
			&+\left((1+|\nabla \varphi_{t}|^2)^{-\frac{3}{2}}-(1+|\nabla \varphi_{0}|^2)^{-\frac{3}{2}}\right)\nabla \varphi_{0}^T D^2\varphi_{0}\nabla \varphi_{0} \\
			&-\frac{\Delta \Psi_{t}}{(1+|\nabla \varphi_{t}|^2)^{1/2}} + 2\frac{\nabla \varphi_{0}^T D^2\varphi_{0}\nabla \Psi_{t}}{(1+|\nabla \varphi_{t}|^2)^{3/2}}+\frac{\nabla \varphi_{0}^T D^2\Psi_{t}\nabla \varphi_{0}}{(1+|\nabla \varphi_{t}|^2)^{3/2}}\\
			&+2\frac{\nabla \varphi_{0}^T D^2\Psi_{t}\nabla \Psi_{t}}{(1+|\nabla \varphi_{t}|^2)^{3/2}}+\frac{\nabla \Psi_{t}^T D^2\varphi_{0}\nabla \Psi_{t}}{(1+|\nabla \varphi_{t}|^2)^{3/2}}+\frac{\nabla \Psi_{t}^T D^2\Psi_{t}\nabla \Psi_{t}}{(1+|\nabla \varphi_{t}|^2)^{3/2}}.\\
		\end{split}
	\end{equation}
	
	Note that for $j = 1,3$, we have
	\begin{equation}\label{Est on roots}
		\begin{split}
			\left|(1+|\nabla \varphi_{t}|^2)^{-j/2}-(1+|\nabla \varphi_{0}|^2)^{-j/2}\right|&= \left|\int_{|\nabla \varphi_{t}|}^{|\nabla\varphi_{0}|}\frac{j s}{(1+s^2)^{\frac{j}{2}+1}}\,ds\right|\leq 2  \left||\nabla \varphi_{0}|^2-|\nabla \varphi_{t}|^2\right|\\
			&= 2\left||\nabla \Psi_{t}|^2+ 2\langle \nabla \Psi_{t},\nabla \varphi_0\rangle\right|\leq 2|\nabla \Psi_{t}|^2 + 4|\nabla \Psi_{t}||\nabla \varphi_{0}|.
		\end{split}
	\end{equation}
	Recall the formulas for $\nabla \varphi_{0}$ and $D^2 \varphi_{0}$:
	\begin{equation*}
		\nabla \varphi_{0}(x) = -\frac{x}{\sqrt{1-|x|^2}},
	\end{equation*}
	and 
	\begin{equation*}
		\left(D^2 \varphi_{0} (x)\right)_{ij} = -\frac{1}{\sqrt{1-|x|^2}}\delta_{ij} +\frac{1}{(1-|x|^2)^{3/2}}x_i x_j = -\frac{1}{\varphi_0(x)}\delta_{ij} +\frac{1}{\varphi_{0}(x)^3}x_i x_j,
	\end{equation*}
	We see that for $t\leq t_1$,
	\begin{equation}\label{Eq: First and Second Order est varphi_0}
		|\nabla \varphi_{0}(x)|\leq 2 |x|,\,\text{and }	|D^2 \varphi_{0} (x)| \leq 3,\quad \text{for all }x\in B_{t}^{N-1}.
	\end{equation}
	
	From~\eqref{Pointwise deviation of the mean curvature} we obtain 
	\begin{equation}\label{Pointwise deviation of mean curvature -1}
		\begin{split}
			\left|(N-1)(H_{t}-1)+\frac{\Delta \Psi_{t}}{(1+|\nabla \varphi_{t}|^2)^{1/2}}\right|&\leq \left|(1+|\nabla \varphi_{0}|^2)^{-1/2}-(1+|\nabla \varphi_{t}|^2)^{-1/2} \right|\left|\Delta \varphi_{0}\right|\\
			&+\left|(1+|\nabla \varphi_{t}|^2)^{-\frac{3}{2}}-(1+|\nabla \varphi_{0}|^2)^{-\frac{3}{2}}\right|\left|\nabla \varphi_{0}^T D^2\varphi_{0}\nabla \varphi_{0}\right|\\
			&+ 2\left|\frac{\nabla \varphi_{0}^T D^2\varphi_{0}\nabla \Psi_{t}}{(1+|\nabla \varphi_{t}|^2)^{3/2}}\right|+\left|\frac{\nabla \varphi_{0}^T D^2\Psi_{t}\nabla \varphi_{0}}{(1+|\nabla \varphi_{t}|^2)^{3/2}}\right|\\
			&+2\left|\frac{\nabla \varphi_{0}^T D^2\Psi_{t}\nabla \Psi_{t}}{N(1+|\nabla \varphi_{t}|^2)^{3/2}}\right|+\left|\frac{\nabla \Psi_{t}^T D^2\varphi_{0}\nabla \Psi_{t}}{(1+|\nabla \varphi_{t}|^2)^{3/2}}\right|+\left|\frac{\nabla \Psi_{t}^T D^2\Psi_{t}\nabla \Psi_{t}}{(1+|\nabla \varphi_{t}|^2)^{3/2}}\right|\\
			&\leq 2 \left(|\nabla \Psi_{t}|^2 + |\nabla \Psi_{t}||\nabla \varphi_{0}| \right)\left(|\Delta \varphi_{0}|+|\nabla \varphi_{0}|^2 |D^2\varphi_{0}|\right)\\
			&+|\nabla \varphi_{0}|| D^2\varphi_{0}||\nabla \Psi_{t}|+|\nabla \varphi_{0}|^2 |D^2\Psi_{t}|+|\nabla \varphi_{0}| |D^2\Psi_{t}||\nabla \Psi_{t}|\\
			& +|\nabla \Psi_{t}|^2 |D^2\varphi_{0}|+|\nabla\Psi_{t}|^2 |D^2\Psi_{t}|\\
			&\leq C_{4}\left(\left(t^{2(k+\alpha))-2} + t^{k+\alpha} \right)\left(1+t^2\right)+t^{k+\alpha}+ t^{k+\alpha+1}\right)\\
			&+ C_{4}\left(t^{3(k+\alpha))-2}+ t^{2(k+\alpha))-2}+t^{3(k+\alpha))-4}\right)\\
			&\leq 9C_{4} t^{k+\alpha},
		\end{split}
	\end{equation}
	where $C_{4}$ is a positive constant dependent on $N$, $k$ and $\alpha$. Here, in the first inequality, we used~\eqref{Pointwise deviation of the mean curvature}. In the second inequality, we used~\eqref{Est on roots}. In the third inequality, we used~\eqref{Eq: First and Second Order est varphi_0},~\eqref{sharp gradient estimate} and~\eqref{sharp second order estimate} (note that $x\in B_{t}^{N-1}$). Finally, for the fourth inequality, we used $t<1$ together with $k\geq 2$.
	So, there is a positive constant $C_{5}$ dependent only on $N$, $k$ and $\alpha$ such that 
	\begin{equation*}
		\left|H_{t}-1\right|\leq \left|\Delta \Psi_{t}\right|+ C_{5}t^{k+\alpha}.
	\end{equation*}
	
	%So, there are positive constants $C_{12}, C_{13}$ dependent only on $N$, such that 
	%\begin{equation*}
	%	C_{12}\left|\Delta \Psi_{t}\right|-C_{13}\lambda^3 (\lambda-1)t^{\lambda}\leq \left|H_{t}-1\right|\leq \left|\Delta \Psi_{t}\right|+ C_{13}\lambda^3 (\lambda-1) t^{\lambda}.
	%\end{equation*}
	Using this in combination with~\eqref{sharp gradient estimate},~\eqref{sharp second order estimate} and recalling that $t\leq t_1$, we see that 
	\begin{equation*}
		\begin{split}
			\|H_{t}-1\|_{L^r(\Gamma_{t})}^r&\leq \int_{B_{t}^{N-1}} \left(\left|\Delta \Psi_{t}\right|+C_{5}t^{k+\alpha}\right)^r\sqrt{1+|\nabla\varphi_{t}(x)|^2}\,dx\\
			&\leq C_{6}\left(1+ t^2\right)^r t^{r(k+\alpha-2)+N-1}\\
			&\leq 2C_{6}\, t^{r(k+\alpha)-2r+N-1},
		\end{split}
	\end{equation*}
	for some positive constant $C_{6}$ dependent only on $N$.

	Combining this with~\eqref{upper bound for mean curvature},~\eqref{Measures} and~\eqref{Perimeters}, we conclude (for $t\leq t_1$) that 
	\begin{equation}\label{L2 deviation of the mean curvature one sided}
		\|H_{t}-H_0\|_{L^r(\Gamma_{t})}\leq  C_{7} t^{k+\alpha+\frac{N-1-2r}{r}},
	\end{equation}
	for some positive constant $C_{7}$ that depend solely on $N$, $k$, $\alpha$ and $r$.
	
	To check the optimality of Theorem~\ref{thm:SBT stability} we combine~\eqref{difference of the radii} and~\eqref{L2 deviation of the mean curvature one sided} to obtain
	\begin{equation*}
		\|H_{t}-H_0\|_{L^r(\Gamma_{t})}^{\tau_{k,\alpha,N,r}}\leq C_{7}^{\tau_{k,\alpha,N,r}} t^{\left(k+\alpha+\frac{N-1-2r}{r}\right)\tau_{k,\alpha,N,r}} \leq C_{7}^{\tau_{k,\alpha,N,r}} (\rho_e(t)-\rho_i(t)),
	\end{equation*}
	where $\tau_{k,\alpha,N,r}$ is given by~\eqref{eq:stability exponent I}.
	Thus, the optimality of Theorem~\ref{thm:SBT stability} is confirmed.
	\begin{rem}
    {\rm
		We point out that, using~\eqref{Pointwise deviation of mean curvature -1}, one can also find a constant $C_{8}$ dependent only on $N$, $k$, $\alpha$ and $p$, such that 
		$$
		\|H_{t}-1\|_{L^r(\Gamma_{t})}\geq \frac{C_{8}}{2^{k+\alpha}}t^{k+\alpha+\frac{N-1-2r}{r}},
		$$
		for $t\leq t_1$, provided that $t_1$ is sufficiently small.
		
		Then, noting that, by the inverse triangle inequality, we also have 
		$$
		\|H_{t}-H_0\|_{L^r(\Gamma_{t})}\geq \|H_{t}-1\|_{L^r(\Gamma_{t})}-|H_{0}-1|\geq \|H_{t}-1\|_{L^r(\Gamma_{t})} -C_2||\Gamma_{t}|-\left|\mathbb{S}^{N-1}\right||-C_2||\Omega_{t}|-|B_1||,
		$$
		where $C_2$ is the constant in~\eqref{upper bound for mean curvature}.
		We also see (for $t\leq t_1$) that
		$$
		\|H_{t}-H_0\|_{L^r(\Gamma_{t})}\geq \frac{C_9}{2^{k+\alpha}}t^{k+\alpha+\frac{N-1-2r}{r}},
		$$
		for some constant $C_{9}$ dependent only on $N$, $k$, $\alpha$ and $r$. This shows that the asymptotic behaviour of $\|H_{t}-H_0\|_{L^r(\Gamma_{t})}$, as $t\to 0^+$, is the same (up to constants dependent on $N$, $k$, $\alpha$ and $r$) as $t^{k+\alpha+\frac{N-1-2r}{r}}$.
    }
	\end{rem}

    \appendix
    \counterwithin*{equation}{section}
    \renewcommand\theequation{\thesection.\arabic{equation}}
    
	\section{Optimality of Theorem~\ref{thm:SBT stability}:
    The $C^{1,\alpha}$ case}\label{sec:Optimality in the rough case}
    %%%%%%%%%%%%%%%%%%%%%%%%%%%%%%%%%%%%%
    %\texorpdfstring{$C^{1,\alpha}$}{} 
    %%%%%%%%%%%%%%%%%%%%%%%%%%%%%%%%%%%%%
	
	We now show how to adapt the estimates in Section~\ref{Sec: Optimality} to show the optimality of Theorem~\ref{thm:SBT stability} in the case where the domain is of class $C^{1,\alpha}$ with $\alpha \in \left(0,1\right)$. In fact, we will take $\alpha \in \left(\frac{r-1}{r},1\right)$ for a (technical) reason that will be made clear later.
	
	To do this, we make the same construction as in Section~\ref{Sec: Optimality}, where in~\eqref{varphi r0} we set $k=1$.\newline
	Note that, as in Section~\ref{Sec: Optimality}, for $t<1$, the $C^{1,\alpha}$ regularity of the family of sets $\{\Gamma_t\}$ is uniformly bounded. Furthermore, for $t<t_1$ (where $t_1:=t_{1}(N)\leq \frac{1}{10(N-1)}$),~\eqref{sharp L infty bound},~\eqref{L infty bound} and~\eqref{sharp gradient estimate} still hold. Consequently,~\eqref{difference of the radii} and~\eqref{Measures} remain valid. However,~\eqref{Perimeters} needs to be updated as follows:
	\begin{equation}\label{Perimeters rough}
		\begin{split}
			\left||\Gamma_{t}|-\left|\mathbb{S}^{N-1}\right|\right|&\leq \int_{B_{t}^{N-1}}|\nabla \Psi_{t}|^2+2|\nabla \Psi_{t}||\nabla \varphi_{0}|\,dx\\
			&\leq C_{1} t^{2\alpha+N-1},
		\end{split}
	\end{equation}
	where $C_{1}$ is a positive constant dependent on $N$.
	Here, in the first inequality, we proceeded as in the first three lines of~\eqref{Perimeters} and for the second inequality, we used~\eqref{sharp gradient estimate} (with $k = 1$) together with $\alpha<1$.
	
	Another difference that one encounters when adapting the estimates in Section~\ref{Sec: Optimality} to the $C^{1,\alpha}$ case is when we need to take into account second-order terms. This is because, when taking two derivatives of $\Psi_t$, singularities appear. For this reason, to avoid the points where singularities appear, we will do our (pointwise) estimates for $x\in \left\{x\in B_{4t}^{N-1}:\forall i\in \{1,..., N-1\}, x_i\neq 0\right\}$.
	Indeed, when estimating a pointwise upper-bound for $|D^2\Psi_t(x)|$ ($x\in\left\{x\in B_{4t}^{N-1}:\forall i\in \{1,...,N-1\},\,x_i\neq 0\right\}$), we now obtain:
	\begin{equation}\label{sharp second order est rough case}
			|D^2\Psi_t(x)|\leq C_{2}\sum_{i=1}^{N-1}|x_i|^{\alpha-1},
	\end{equation}
	where $C_{2}$ is a positive constant dependent only on $N$.\\
	By proceeding just as in Section~\ref{Sec: Optimality} we obtain:
	\begin{equation*}
		\begin{split}
			\left|(N-1)(H_{t}-1)+\frac{\Delta \Psi_{t}}{(1+|\nabla \varphi_{t}|^2)^{1/2}}\right|&\leq 2 \left(|\nabla \Psi_{t}|^2 + |\nabla \Psi_{t}||\nabla \varphi_{0}| \right)\left(|\Delta \varphi_{0}|+|\nabla \varphi_{0}|^2 |D^2\varphi_{0}|\right)\\
			&+|\nabla \varphi_{0}|| D^2\varphi_{0}||\nabla \Psi_{t}|+|\nabla \varphi_{0}|^2 |D^2\Psi_{t}|+|\nabla \varphi_{0}| |D^2\Psi_{t}||\nabla \Psi_{t}|\\
			& +|\nabla \Psi_{t}|^2 |D^2\varphi_{0}|+|\nabla\Psi_{t}|^2 |D^2\Psi_{t}|\\
			&=  2 \left(|\nabla \Psi_{t}|^2 + |\nabla \Psi_{t}||\nabla \varphi_{0}| \right)\left(|\Delta \varphi_{0}|+|\nabla \varphi_{0}|^2 |D^2\varphi_{0}|\right)\\
			&+|\nabla \varphi_{0}|| D^2\varphi_{0}||\nabla \Psi_{t}|+|\nabla \Psi_{t}|^2 |D^2\varphi_{0}|\\
			& +\left(|\nabla \varphi_{0}|^2 +|\nabla \varphi_{0}||\nabla \Psi_{t}|+ |\nabla\Psi_{t}|^2\right)\left|D^2\Psi_{t}\right|\\
			&\leq C_{3}\left(t^{2\alpha}+ t^{1+\alpha}\right)(1+t^2)+ C_{3} t^{1+\alpha} + C_{3}t^{2\alpha}\\
			&+C_{3}\left(t^2 + t^{1+\alpha}+t^{2\alpha}\right)\sum_{i=1}^{N-1}|x_i|^{\alpha-1}\\
			&\leq C_{4}t^{2\alpha}\sum_{i=1}^{N-1}|x_i|^{\alpha-1},
		\end{split}
	\end{equation*}
	where $C_{3}$ and $C_{4}$ are positive constants dependent only on $N$. In the chain of estimates above, the first inequality follows from~\eqref{Pointwise deviation of the mean curvature} and the first two inequalities in~\eqref{Pointwise deviation of mean curvature -1}, the second inequality follows from~\eqref{sharp gradient estimate},~\eqref{Eq: First and Second Order est varphi_0} and~\eqref{sharp second order est rough case}, and the third inequality is a consequence of the facts that $t\leq t_1$ and $\alpha<1$.
	
	Thus, we see that
	\begin{equation*}
		\left|H_{t}-1\right|\leq \left|\Delta \Psi_{t}\right|+ C_{4} t^{2\alpha}\sum_{i=1}^{N-1}|x_i|^{\alpha-1}.
	\end{equation*}
	Noting that by construction, we have 
	\begin{equation*}
		\Delta \Psi_t = (1+\alpha)\alpha \sum_{i=1}^{N-1}\frac{|x_i|^{\alpha-1}}{4^{1+\alpha}}.
	\end{equation*}
	This leads to the following pointwise estimate for the deviation of the mean curvature: 
	\begin{equation*}
		\left|H_{t}-1\right|\leq C_{5} \sum_{i=1}^{N-1}|x_i|^{\alpha-1},
	\end{equation*}
	where $C_{5}$ is a positive constant dependent only on $N$.
	So, we have
	\begin{equation}\label{L2 Mean curvature deviation rough case}
		\begin{split}
			\|H_t-1\|_{L^r(\Gamma_t)}^{r}&= \int_{B_{t}^{N-1}} \left|H_t-1\right|^r\sqrt{1+|\nabla\varphi_t|^2}\,dx\\
			&= \int_{\left\{x\in B_{t}^{N-1}:\forall i\in \{1,..., N-1\}, x_i\neq 0\right\}} \left|H_t-1\right|^r\sqrt{1+|\nabla\varphi_t|^2}\,dx\\
			&\leq C_{6}  \int_{\left\{x\in B_{t}^{N-1}:\forall i\in \{1,..., N-1\}, x_i\neq 0\right\}} \left(\sum_{i=1}^{N-1}|x_i|^{\alpha-1}\right)^r\,dx\\
			&\leq C_{6}\int_{-t}^{t}\cdots \int_{-t}^{t} \left(\sum_{i=1}^{N-1}|x_i|^{\alpha-1}\right)^r\,dx_1\cdots dx_{N-1}\\
			&=C_{6}2^{N-1} \int_{0}^{t}\cdots \int_{0}^{t}\left(\sum_{i=1}^{N-1}x_{i}^{(\alpha-1)r}\right)dx_1\cdots dx_{N-1}\\
			&\leq \frac{C_{7}}{r\alpha+1-r} t^{(1+\alpha)r+N-1-2r}
		\end{split}
	\end{equation}
	where $C_{6}$ and $C_{7}$ are positive constants dependent only on $N$. On the second line, we used the fact that $\left|B_{t}^{N-1}\setminus\left\{x\in B_{t}^{N-1}:\forall i\in \{1,..., N-1\}, x_i\neq 0\right\}\right|=0$. On the last inequality, we used the assumption that $\alpha>\frac{r-1}{r}$ (without this assumption, note that $H_t\notin L^r (\Gamma_t)$). 
	
	Also note that combining~\eqref{sharp second order est rough case} and the last four lines in~\eqref{L2 Mean curvature deviation rough case} show that $D^2 \Psi_t \in L^r(B_{t}^{N-1})$, with $\|D^2 \Psi_t\|_{L^r(B_{t}^{N-1})}$ being uniformly bounded with respect to $t<t_1$. As a consequence, we have that the family of sets $\{\Gamma_t\}$ is in $W^{2,r}$. 
	
	Now, to conclude that the estimates of Theorem~\ref{thm:SBT stability} for $C^{1,\alpha}$ domains are optimal, we proceed like we did in Section~\ref{Sec: Optimality} to confirm the optimality of Theorem~\ref{thm:SBT stability} for more regular domains.

	\section{Proof of Lemma \ref{lem: Generalization of 1.2}}\label{Proof of lemma of generalized 1.2}
    Throughout this section, let $r\in (1,\infty)$ and $\alpha\in(0,1]$ and $\Omega\subset \mathbb{R}^N$ be a bounded domain whose boundary $\Ga$ is of class $C^{1,\alpha}\cap W^{2,r}$.
    
    To prove Lemma \ref{lem: Generalization of 1.2}, we consider a solution, $u$, to the so-called torsion problem, that is, the following boundary value problem:
	\begin{equation}\label{eq:torsionproblem}
	\begin{cases}
		\De u = N \quad & \text{ in } \Om,
		\\
		u=0 \quad & \text{ on } \Ga.
	\end{cases}
	\end{equation}
    Since $u$ is the solution to \eqref{eq:torsionproblem}, by standard elliptic regularity (see, e.g., \cite{GT}) we have that $u\in C^{1,\al}(\ol{\Om})$ and
    \begin{equation}\label{eq:elliptic regularity u Schauder}
		\nr u \nr_{C^{1,\al}(\ol{\Om})} \le C,
	\end{equation}
	for a constant $C$ only depending on $N,\al$, and the $C^{1,\al}$-regularity of $\Ga$.
    Given a point $z\in \RR^N$, for $x \in \ol{\Om}$ we set
	\begin{equation}\label{eq:def h and q}
		h_z(x) := q_z(x) - u(x) , \quad \text{ where } \quad q_z(x) := \frac{|x-z|^2}{2}  .
	\end{equation}

	%The following two lemmas are essentially contained in \cite{MP2},
	%(see also \cite{Feldman}), 
    The following three lemmas are essentially contained in \cite{MP2}, except for the fact that the uniform sphere condition used there is replaced by the $C^{1,\al}$-regularity of $\Ga$ (with $0<\al\le 1$) here. This requires some technical modifications that we collect here for the reader's convenience. This is in analogy with what was done in \cite[Appendix]{CPY} for the stability results obtained in \cite{MP2,MP3,MP6} concerning Serrin's overdetermined problem.

	We are working with the $C^{1,\al}$-regularity of $\Ga$; nevertheless, we mention that in some results such a parameter may be weakened by exploiting the notion of interior pseudo-ball condition in \cite{ABMMZ}.
	
	The first lemma adapts to our setting \cite[Lemma~3.1]{MP2}, whereas the second lemma adapts to our setting \cite[Lemma~3.4]{MP2}. The third lemma provides an explicit upper bound for $|\Ga|$, in terms of $N$, $|\Om|$ and the $C^{1,\al}$ regularity of $\Ga$; this will be useful to simplify the dependencies of the constants in our estimates.
	
	% that was used in \cite{Feldman,MP2,MP3,MP6} in a setting where $\Om$ satisfied a uniform interior %sphere condition.
	
	%More precisely, we are replacing the uniform interior sphere condition used in %\cite{Feldman,MP2,MP3,MP6} with the $C^{1,\al}$-regularity of $\Ga$, for $0\le\al<1$.
	
	\begin{lem}\label{lem:distance growth}
		Given $N\ge2$, let $\Om\subset \RR^N$ be a bounded domain in $\RR^N$ with boundary $\Ga$ of class $C^{1,\al}$ and let $u$ 
		%$\in C^2(\Om) \cap  C^0(\ol{\Om})$ 
		be the solution to \eqref{eq:torsionproblem}.
		
		Then,
		\begin{equation}\label{eq:distanceLipschitzgrowth}
		- u(x) \ge C \, \de_{\Ga}(x) \qquad \qquad \text{for any } x\in\ol{\Om} ,
		\end{equation}
		where $\de_{\Ga}(x)$ is the distance function from $\Ga$, and where $C$ is a positive constant dependent only on $N$ and the $C^{1,\al}$-regularity of $\Om$.
	\end{lem}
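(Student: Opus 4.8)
The plan is to prove \eqref{eq:distanceLipschitzgrowth} separately in an interior regime and in a boundary layer, the threshold being a radius $\rho_0>0$ depending only on $N$ and the $C^{1,\al}$-regularity of $\Om$. Throughout one uses that, since $\De u=N>0$, $u$ is subharmonic and vanishes on $\Ga$, so by the strong maximum principle $u<0$ in $\Om$; moreover, by \eqref{eq:elliptic regularity u Schauder}, $\nr u\nr_{C^{1,\al}(\ol\Om)}\le C_0$ for a constant $C_0$ depending only on $N$, $\al$ and the $C^{1,\al}$-regularity of $\Ga$.

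In the interior regime I would argue by comparison with an explicit torsion function. If $x\in\Om$ and $\de:=\de_\Ga(x)$, then $B_\de(x)\subset\Om$ and the function $w(y):=\tfrac12\bigl(|y-x|^2-\de^2\bigr)$ solves $\De w=N$ in $B_\de(x)$ with $w=0$ on $\pa B_\de(x)$; since $u\le0=w$ on $\pa B_\de(x)$ and $\De(u-w)=0$, the maximum principle gives $u(x)\le w(x)=-\de^2/2$, that is, $-u(x)\ge\de_\Ga(x)^2/2$. In particular, whenever $\de_\Ga(x)\ge\rho_0$ this already yields $-u(x)\ge\tfrac{\rho_0}{2}\,\de_\Ga(x)$.

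The boundary layer $\{0<\de_\Ga(x)<\rho_0\}$ is where the real work lies, and this is the step that adapts \cite[Lemma~3.1]{MP2} from the interior sphere condition to the $C^{1,\al}$-regularity, in the spirit of \cite[Appendix]{CPY}. Fixing such an $x$ and its (for $\rho_0$ small, unique) closest boundary point $y\in\Ga$, the $C^{1,\al}$-regularity supplies an interior pseudo-ball $P_y\subset\Om$ with $y\in\pa P_y$ and $x\in P_y$, all the $P_y$ being congruent, up to a rigid motion, to a fixed pseudo-ball determined by the $C^{1,\al}$-modulus (this replaces the interior ball of \cite{MP2}; see also \cite{ABMMZ}). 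Since the torsion function $w_{P_y}$ of $P_y$ vanishes on $\pa P_y\subset\ol\Om$ and $u\le0=w_{P_y}$ there, the maximum principle gives $u\le w_{P_y}$ in $P_y$, so it suffices to prove the linear lower bound $-w_{P_y}(x)\ge c\,|x-y|=c\,\de_\Ga(x)$ near the contact point. Equivalently, one establishes a quantitative Hopf lemma $\pa_\nu u\ge m>0$ on $\Ga$ — with $m$ depending only on $N$ and the $C^{1,\al}$-modulus, via the pseudo-ball barrier fed by the interior estimate $-u\ge \rho_0^2/32$ at a fixed interior depth inside $P_y$ — and then combines it with $\nr u\nr_{C^{1,\al}(\ol\Om)}\le C_0$ and a Taylor expansion along the inner normal: writing $x=y-\de\,\nu(y)$ with $\de=\de_\Ga(x)$ and using $u(y)=0$,
\begin{equation*}
	u(x)=\int_0^\de \pa_t\bigl[u(y-t\nu(y))\bigr]\,dt\le\int_0^\de\bigl(-m+C_0 t^\al\bigr)\,dt= -m\,\de+\frac{C_0}{1+\al}\,\de^{1+\al}\le -\frac m2\,\de
\end{equation*}
provided $\de\le\de_1:=\bigl((1+\al)m/(2C_0)\bigr)^{1/\al}$ (shrinking $\de_1$, if needed, so that $\de_1\le\rho_0$). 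This gives $-u(x)\ge\tfrac m2\,\de_\Ga(x)$ in the boundary layer.

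Finally I would combine the two regimes: for $\de_\Ga(x)\le\de_1$ the boundary-layer bound gives $-u\ge\tfrac m2\,\de_\Ga(x)$, while for $\de_\Ga(x)\ge\de_1$ the interior bound gives $-u\ge\tfrac{\de_\Ga(x)^2}{2}\ge\tfrac{\de_1}{2}\,\de_\Ga(x)$, so \eqref{eq:distanceLipschitzgrowth} holds with $C:=\tfrac12\min\{m,\de_1\}$, a constant depending only on $N$ and the $C^{1,\al}$-regularity of $\Om$. The main obstacle is the boundary-layer step: because $C^{1,\al}$ domains with $\al<1$ fail the uniform interior ball condition, the explicit ball barrier of \cite{MP2} is no longer available and one must construct a barrier on the flatter interior pseudo-ball (equivalently, prove a quantitative Hopf lemma for $C^{1,\al}$, i.e.\ Dini, domains); the gain over the naive super-linear bound $-u\gtrsim\de_\Ga^{\,2/(1+\al)}$ that mere ball comparison produces comes precisely from the summability $\int_0 t^{\al-1}\,dt<\infty$ of the modulus $t\mapsto t^\al$ — which holds exactly because $\al>0$ — and care is needed to keep every constant dependent only on $N$ and the $C^{1,\al}$-regularity of $\Om$.
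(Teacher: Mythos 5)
Your proof is correct and follows the same overall strategy as the paper: an interior regime handled by comparison with the ball torsion function $w(y)=\tfrac12(|y-x|^2-\de_\Ga(x)^2)$, giving the rough estimate $-u\ge\de_\Ga^2/2$; a boundary layer handled by a Hopf-type estimate tailored to $C^{1,\al}$ domains; and a matching of the two regimes at a fixed threshold. The one genuine (but minor) variation is in the boundary-layer step. The paper invokes the Hopf--Oleinik lemma for $C^{1,\al}$ (Dini) domains in its \emph{linear-growth} form, $-u(y-t\nu(y))>\kappa t$ for $0<t<\ul\de$ with $\kappa,\ul\de$ depending only on $N$ and the $C^{1,\al}$-regularity (citing \cite{Gi} and \cite[Theorem~4.4]{ABMMZ}), and concludes immediately. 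You instead extract the pointwise Hopf conclusion $u_\nu\ge m$ on $\Ga$ from a pseudo-ball barrier and then \emph{derive} the linear growth along the inner normal by combining this with the Schauder bound \eqref{eq:elliptic regularity u Schauder} and a Taylor expansion, which is correct: $\pa_t\bigl[u(y-t\nu(y))\bigr]\le -m+C_0t^\al$ integrates to $-u(x)\ge \tfrac m2\,\de_\Ga(x)$ for $\de_\Ga(x)\le\de_1$ with $\de_1$ as you define it. This is a valid self-contained reconstruction of the Hopf--Oleinik estimate, at the cost of invoking the $C^{1,\al}$ regularity of $u$ inside this lemma (which the paper's proof of this particular lemma avoids, though the estimate is used elsewhere) and of leaving the pseudo-ball barrier step --- the nontrivial replacement for the interior ball, i.e.\ precisely what \cite{ABMMZ} provides --- as a sketch rather than a full argument. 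You might also note explicitly that $x\in P_y$ and that the nearest point $y$ is unique with $x=y-\de_\Ga(x)\,\nu(y)$; both hold once $\rho_0$ is taken small depending only on the $C^{1,\al}$ data, as the paper implicitly uses as well.
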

	\begin{proof}
	By comparison, we easily obtain the following rough estimate
	\begin{equation}\label{eq:rough estimate distance2}
		- u(x) \ge \frac{\de_{\Ga}(x)^2}{2} \qquad \qquad \text{for any } x\in\ol{\Om}.
	\end{equation}
	The last inequality easily follows by comparing $u$ with $w(y):= \left( |y-x|^2 - \de_{\Ga}(x)^2 \right)/2$ (that is the solution of \eqref{eq:torsionproblem} in the ball $B_{\de_{\Ga}(x)}(x)$ centered at $x$ with radius $\de_{\Ga}(x)$). We stress that we have not used the $C^{1,\al}$-regularity of $\Ga$ yet, and in fact \eqref{eq:rough estimate distance2} remains true without such an assumption (see, e.g, \cite[Lemma~3.1]{MP2}).
	
	By the Hopf-Olenik lemma for $C^{1,\al}$ domains (see, e.g., \cite{Gi} and \cite[Theorem~4.4]{ABMMZ}), for any $y\in\Ga$ we have that
	\begin{equation}\label{eq:HopfC1al}
	- u(y - t \, \nu(y))> \kappa t \qquad \qquad \text{for any } 0<t< \ul{\de} ,
	\end{equation}
	where $\kappa$ and $\ul{\de}$ are two constants only depending on $N$ and the $C^{1,\al}$-regularity of $\Ga$. Here, as usual, $\nu(x)$ denotes the outward unit normal to $\Ga$ at $x$. 
	Let $x$ be any point in $\ol{\Om}$.
	If $\de_{\Ga}(x) < \ul{\de}$, then \eqref{eq:HopfC1al} with $t:=\de_{\Ga}(x)$ and $y\in\Ga$ such that $|y-x|=\de_{\Ga}(x)$ (so that $y - t \, \nu(y) = x $) gives that \eqref{eq:distanceLipschitzgrowth} holds true with $C:=\kappa$.
	On the other hand, if $\de_{\Ga}(x) \ge \ul{\de}$, \eqref{eq:rough estimate distance2} easily gives \eqref{eq:distanceLipschitzgrowth} with $:=\ul{\de}/(2N)$.
	
	In any case, for any $x \in \ol{\Om}$, \eqref{eq:distanceLipschitzgrowth} always holds true with $C:=\max \left\lbrace \kappa ,\ul{\de}/(2N) \right\rbrace$.
	\end{proof}
	
	\begin{lem}\label{lem: lemma 2 in appendix for C 1 alpha}
	Given $N\ge2$, let $\Om\subset \RR^N$ be a bounded domain in $\RR^N$ with boundary $\Ga$ of class $C^{2}$. Let $z$ be the center of mass of $\Om$, that is,
    \begin{equation*}
        z= \frac{1}{|\Om|} \int_\Om x \, dx .
    \end{equation*} \
    Let $u$ be the solution to \eqref{eq:torsionproblem} and $h_z$ be as defined in \eqref{eq:def h and q}.
	Then, we have that
	\begin{equation*}
	\int_{\Ga} | \na h_z |^2 dS_x \le C \int_\Om (-u) | D^2 h_z|^2 dx,
	\end{equation*}
	where $C$ is a constant only depending on $N$, the diameter of $\Om$, and the $C^{1,\al}$-regularity of $\Ga$.
	\end{lem}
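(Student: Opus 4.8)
The plan is to adapt the argument of \cite{MP2}, replacing the interior sphere condition used there --- exactly as in Lemma~\ref{lem:distance growth} --- by the $C^{1,\al}$-regularity of $\Ga$. The key starting observation is that $h_z$ is \emph{harmonic}: $\De h_z = \De q_z - \De u = N-N = 0$ by \eqref{eq:torsionproblem}, so Bochner's identity yields $\De\,|\na h_z|^2 = 2\,|D^2 h_z|^2$ in $\Om$. Applying Green's second identity to the pair $\big(-u,\,|\na h_z|^2\big)$ on $\Om$, and using $u=0$ on $\Ga$ together with $\De(-u) = -N$, one is led to the governing identity
\begin{equation*}
\int_\Ga |\na h_z|^2\,\pa_\nu u\, dS_x \;=\; 2\int_\Om (-u)\,|D^2 h_z|^2\, dx \;+\; N\int_\Om |\na h_z|^2\, dx .
\end{equation*}
Since $\Ga\in C^2$ only gives $u\in W^{2,p}(\Om)\cap C^{1,\al}(\ol\Om)$ (via \eqref{eq:elliptic regularity u Schauder}) rather than $u\in C^2(\ol\Om)$, I would establish this identity by exhausting $\Om$ with the smooth subdomains $\Om_\eps := \{\de_\Ga > \eps\}$, writing Green's identity on $\Om_\eps$, and letting $\eps\to 0^+$: the bulk terms converge by dominated convergence since $D^2 h_z = I - D^2 u\in L^p(\Om)$, while the extra boundary terms on $\pa\Om_\eps$ are $O\!\big(\eps\int_{\pa\Om_\eps}|D^2 h_z|\,dS\big)$ and vanish along a suitable subsequence $\eps_k\to 0^+$ by the coarea formula (as $D^2 h_z\in L^1(\Om)$).

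With the identity in hand, the next step is the easy reduction on the boundary. On $\Ga$ one has $\pa_\nu u \le \|\na u\|_{L^\infty(\Om)}\le C$ by \eqref{eq:elliptic regularity u Schauder}, while $\pa_\nu u \ge \ka > 0$ by the Hopf--Oleinik lemma for $C^{1,\al}$ domains --- the very tool used to obtain \eqref{eq:HopfC1al} in Lemma~\ref{lem:distance growth} --- with $\ka$ depending only on $N$ and the $C^{1,\al}$-regularity of $\Ga$. Inserting these two bounds into the identity gives
\begin{equation*}
\int_\Ga |\na h_z|^2\, dS_x \;\le\; \frac{2}{\ka}\int_\Om (-u)\,|D^2 h_z|^2\, dx \;+\; \frac{N}{\ka}\int_\Om |\na h_z|^2\, dx ,
\end{equation*}
so it remains only to absorb the lower-order term $\int_\Om |\na h_z|^2\, dx$ into $\int_\Om (-u)\,|D^2 h_z|^2\, dx$.

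I expect this last absorption to be the main obstacle, and it is where the specific point $z$ --- the normalized one produced in Lemma~\ref{lem: Generalization of 1.2}, in the spirit of \cite{MP2} --- plays its role: a normalization of $z$ is genuinely indispensable at this stage (for instance, for a ball centred away from $z$ the right-hand side vanishes while the left-hand side does not). Each component $\pa_i h_z$ being harmonic and $\na h_z$ having, thanks to the choice of $z$, a suitable vanishing mean over $\Om$, one combines a Poincar\'e/Hardy-type argument with the two-sided comparison $c_1\,\de_\Ga \le -u \le c_2\,\de_\Ga$ on $\Om$ (the lower bound being precisely Lemma~\ref{lem:distance growth}, the upper bound following from $u\in C^{1,\al}(\ol\Om)$ with $u|_\Ga = 0$) to deduce $\int_\Om |\na h_z|^2\, dx \le C\int_\Om (-u)\,|D^2 h_z|^2\, dx$, with $C$ depending only on $N$, $d_\Om$, and the $C^{1,\al}$-regularity of $\Ga$; plugging this into the previous display and relabelling the constant completes the proof. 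The delicate point is precisely that the weight $-u$ degenerates \emph{linearly} at $\Ga$, so passing from the non-degenerate interior $L^2$-norm of $\na h_z$ to the $(-u)$-weighted integral of $|D^2 h_z|^2$ forces one to use both the harmonicity of $\na h_z$ and the geometry of $\Ga$ --- and it is exactly here that the $C^{1,\al}$-regularity does the work of the interior sphere condition in \cite{MP2}.
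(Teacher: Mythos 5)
Your reconstruction is, in substance, the argument the paper delegates to \cite[Lemma~2.5~(i)]{MP3}: the Green/Bochner identity $\int_\Gamma |\nabla h_z|^2 u_\nu\,dS = 2\int_\Omega(-u)|D^2 h_z|^2\,dx + N\int_\Omega|\nabla h_z|^2\,dx$, the Hopf--Oleinik lower bound $u_\nu\ge\kappa$ from \eqref{eq:HopfC1al}, and the absorption of the lower-order bulk term via a weighted (Hardy--Poincar\'e) inequality using $-u\ge C\,\delta_\Gamma$ from Lemma~\ref{lem:distance growth}. These two external inputs are exactly the replacements the paper prescribes for the $C^{1,\alpha}$ setting, so the approach is essentially the same, and the ball counterexample you give is a correct and useful observation that the statement implicitly requires $z$ to be a critical point of $u$ (as in the application within Lemma~\ref{lem: Generalization of 1.2}).

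Two small corrections to your sketch of the absorption step. First, $\nabla h_z$ does \emph{not} have vanishing mean over $\Omega$ for that $z$: since $\int_\Omega \nabla u\,dx = \int_\Gamma u\,\nu\,dS = 0$, one has $\int_\Omega \nabla h_z\,dx = |\Omega|\,(\mathrm{bar}(\Omega)-z)$, which vanishes only if $z$ is the barycentre. What the critical-point choice actually gives is $\nabla h_z(z)=0$; since each component $\partial_i h_z$ is harmonic, the mean-value property then yields zero average over balls centred at $z$, which is the normalisation the Poincar\'e/Hardy argument genuinely uses. Second, only the lower comparison $-u\ge c_1\delta_\Gamma$ is needed (to pass from $\int_\Omega\delta_\Gamma|D^2 h_z|^2$ to $\int_\Omega(-u)|D^2 h_z|^2$); the upper bound $-u\le c_2\delta_\Gamma$ is true but plays no role here.
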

	\begin{proof}
    Note that by the choice of $z$ and the definition of $h_z$, it follows that
    $$
        \int_\Om \na h_z \, dx = 0.
    $$
	The result easily follows using the same argument of the proof of \cite[(ii) of Lemma 2.5]{MP3} (with $v:=h_z$), but replacing \cite[(1.15)]{MP3} with \eqref{eq:distanceLipschitzgrowth} and \cite[Theorem 3,10]{MP} with
	\begin{equation*}
		u_\nu \ge \kappa , \quad \text{ where } \kappa \text{ is that appearing in \eqref{eq:HopfC1al},}
	\end{equation*}
	which immediately follows from \eqref{eq:HopfC1al}.
	\end{proof}

    \begin{lem}\label{lem:bound on surface measure Ga}
        Given $N\ge 2$, let $\Om \subset \RR^N$ with boundary $\Ga$ of class $C^{1,\alpha}$. Then, we have that 
        \begin{equation*}
            |\Ga| \leq \frac{N|\Om|}{\kappa},
        \end{equation*}
        where $\kappa$ is the constant appearing in \eqref{eq:HopfC1al}.
    \end{lem}
    \begin{proof}
        Let $u\in C^{1,\alpha}(\overline{\Om})$ be the solution of \eqref{eq:torsionproblem}. 
        Using the divergence theorem, we obtain the following identity
        $$
        N|\Om| = \int_{\Ga} u_{\nu} \nu \, d\mathcal{H}^{N-1}.
        $$
        Combining this with the fact that $u_{\nu} \geq \kappa$, which follows immediately from \eqref{eq:HopfC1al}, we obtain 
        $$
        N|\Om| \geq \kappa |\Ga|,
        $$
        from which the result follows immediately.
    \end{proof}
%%%%%%%%%%%%%%%%%%%%%%%%%%%%%%%%	
	%These modifications show that \cite[Theorem 3.5]{MP3} and 
    %\cite[(i) of Theorem 3.9]{MP6} remain true even replacing the dependence of the %constant $C$ on $r_i$ and $r_e$ with that on the $C^{1,\al}$-regularity of $\Ga$, %where $0<\al<1$.
%%%%%%%%%%%%%%%%%%%%%%%%%%%%%%%%%%%%%%%

    \begin{proof}[Proof of Lemma \ref{lem: Generalization of 1.2}]
    Start by assuming that $\Ga$ is of class $C^2$. Recall the following fundamental identity, which was proven in \cite{MP}
        \begin{equation*}
            \frac{1}{N-1}\int_{\Omega}|D^2 h_z|^2\,dx + H_{0}\int_{\Ga}\left(u_{\nu}-\frac{1}{H_0}\right)^2\,d\mathcal{H}^{N-1} = \int_{\Ga} \left(H-H_0\right)(u_{\nu})^2\,d\mathcal{H}^{N-1}.
            \end{equation*}
    Combining this identity,~\eqref{eq:elliptic regularity u Schauder} and Hölder's inequality, we find that 
    \begin{equation}\label{eq:Holder on fundamental identity}
        \frac{1}{N-1}\int_{\Omega}|D^2 h_z|^2\,dx + H_{0}\int_{\Ga}\left(u_{\nu}-\frac{1}{H_0}\right)^2\,d\mathcal{H}^{N-1}\leq C \|H-H_0\|_{L^r(\Ga)},
    \end{equation}
    where $C$ is a positive constant dependent on $r$, $N$, $|\Ga|$ and the $C^{1,\alpha}$ regularity of $\Ga$.

    Using the triangle inequality, we see that 
    \begin{equation}\label{eq:estimate on term 1 in generalized 1.2}
         \left\||x-z|-\frac{1}{H_0}\right\|_{L^2(\Ga)}\leq \left\|\frac{1}{H_0}\nu - (x-z)\right\|_{L^2(\Ga)}
    \end{equation}
    and
    \begin{equation}\label{eq:estimate on term 2 in generalized 1.2}
        \begin{split}
            \left\|\frac{1}{H_0}\nu - (x-z)\right\|_{L^2(\Ga)}&\leq \left\|\frac{1}{H_0}-u_{\nu}\right\|_{L^2(\Ga)} +  \|\nabla h_z\|_{L^2(\Ga)}.
        \end{split}
    \end{equation}
    Combining \eqref{eq:elliptic regularity u Schauder}, Lemma \ref{lem: lemma 2 in appendix for C 1 alpha},\eqref{eq:Holder on fundamental identity}, \eqref{eq:estimate on term 1 in generalized 1.2} and \eqref{eq:estimate on term 2 in generalized 1.2} we conclude that 
    \begin{equation*}
        \left\|\frac{1}{H_0}\nu - (x-z)\right\|_{L^2(\Ga)} + \left\||x-z|-\frac{1}{H_0}\right\|_{L^2(\Ga)} \leq C \|H-H_0\|_{L^r(\Ga)}^{1/2},
    \end{equation*}
    where $C$ is a positive constant dependent on $N$, $\alpha$, $r$, $H_0$, $|\Ga|$ and the $C^{1,\alpha}$ regularity of $\Ga$; the dependence on $|\Ga|$ and $H_0$ can be removed and replaced with the dependence on $|\Om|$, by leveraging Lemma \ref{lem:bound on surface measure Ga}; in turn, the dependence on $|\Om|$ can be replaced by the dependence on the diameter of $\Om$.
    
    This concludes the proof of the result under the assumption that $\Ga$ is of class $C^{2}$. For $\Ga$ of class $C^{1,\alpha}\cap W^{2,r}$, we simply leverage the approximation result in \cite{Antonini}.
    \end{proof}
    
	\section{Proof of Lemma~\ref{lem:bound of flat W^2,2 Sobolev norm}}\label{Appendix: Proof of Lemma bound of Sobolev norms}
    \begin{proof}
    We start by describing the (standard) stereographic projection. Let $P=(0,...,1)\in\SS^{N-1}$, the stereographic projection from $P$ is given by $\iota:\SS^{N-1}\setminus\{P\}\rightarrow \RR^{N-1}$ be given by 
        \begin{equation}\label{eq:stereographic projection from the north pole}
            \iota(x_1,...,x_N)=\frac{1}{1-x_N}(x_1,...,x_{N-1}).
        \end{equation}
	
	The (standard) metric on $\SS^{N-1}$, $G(x)=(g_{ij}(x))$ in the local coordinates induced by $\iota$ is given by (see \cite[Chapter 1]{JJost})
	\begin{equation}\label{eq:metric on sphere}
		g_{ij}(x)= \frac{4}{(|x|^2+1)^2}\delta_{ij}, \quad x\in \mathbb{R}^{N-1}, 
	\end{equation}
	and $G^{-1}(x)=(g^{ij}(x))$ is given by 
	\begin{equation}\label{eq:inverse of metric on sphere}
		g^{ij}(x)= \frac{(|x|^2+1)^2}{4}\delta_{ij}, \quad x\in \mathbb{R}^{N-1}.
	\end{equation}
	
		By definition, we have (see \cite[Chapter 2]{Hebey})
		\begin{equation}\label{eq: W 2,p norm sphere}
			\|v\|_{W^{2,p}(\SS^{N-1})}= \left(\int_{\SS^{N-1}}|v|^p+|\nabla_{\SS^{N-1}}v|^p+ |D_{\SS^{N-1}}^2v|^p \,dS_x\right)^{1/2},
		\end{equation}
		where $\nabla_{\SS^{N-1}}v$ and $D_{\SS^{N-1}}^2v$ denote the first and second derivatives on the sphere. Since $v\circ \iota^{-1}$ denotes the function $v$ written in the coordinates induced by the stereographic projection, we will use the slight abuse of notation and also denote by $v$ the function $v\circ \iota^{-1}$.
        
        In the coordinates induced by the stereographic projection, we have 
		\begin{equation}\label{eq:first covariant derivative}
			|\nabla_{\SS^{N-1}}v(x)|^2 = g^{ij}(x)\partial_i v(x) \partial_j v(x) = \frac{(|x|^2+1)^2}{4}|\nabla v|^2, 
		\end{equation}
		\begin{equation}\label{eq:second covariant derivative}
			\begin{split}
				\left|D_{\SS^{N-1}}^2v\right|^2&= g^{ia}(x) g^{jb}(x)\left(\partial_{ij}v(x)-\Gamma_{ij}^k \partial_k v(x)\right)\left(\partial_{ab}v(x)-\Gamma_{ab}^k \partial_k v(x)\right)\\
				&= \frac{(|x|^2+1)^4}{16} \left(\partial_{ij}v(x)-\Gamma_{ij}^k \partial_k v(x)\right)^2\\
			\end{split}
		\end{equation}
		where we employed the Einstein summation convention, and where $\Gamma_{ij}^k$ denote the Christoffel symbols, which are given by 
	\begin{equation*}
		\Gamma_{ij}^k=\frac{1}{2}g^{nk}\left(\partial_j g_{in}(x)+\partial_i g_{jn}(x)-\partial_n g_{ij}(x)\right).
	\end{equation*}
	Using~\eqref{eq:metric on sphere} and~\eqref{eq:inverse of metric on sphere} we see that 
	\begin{equation*}
		\Gamma_{ij}^k= -\frac{2}{|x|^2+1}\left(x_j\delta_{ik}+x_i\delta_{jk}-x_k\delta_{ij}\right),
	\end{equation*}
	which, in particular, yields 
	$$
	\left|\Gamma_{ij}^{k}\right|\leq \frac{2|x|}{|x|^2+1},\quad \text{for all }x\in\mathbb{R}^{N-1}.
	$$
    Combining this, \eqref{eq:first covariant derivative} and \eqref{eq:second covariant derivative}, we see that
	\begin{equation*}
		\begin{split}
			\left|D_{\SS^{N-1}}^2v\right|^2&= \frac{(|x|^2+1)^4}{16} \left(\partial_{ij}v(x)-\Gamma_{ij}^k \partial_k v(x)\right)^2\\
			&= \frac{(|x|^2+1)^4}{16}\left((\partial_{ij}v(x))^2-2\partial_{ij}v(x)\Gamma_{ij}^k \partial_k v(x) +  \left(\Gamma_{ij}^k \partial_k v(x)\right)^2\right)\\
			&\geq \frac{(|x|^2+1)^4}{16}\left((\partial_{ij}v(x))^2\left(1-\left|\Gamma_{ij}^k\right|\right)+  \left(\left(\Gamma_{ij}^k\right)^2-|\Gamma_{ij}^k|\right) \left(\partial_k v(x)\right)^2\right)\\
			&\geq \frac{(|x|^2+1)^4}{16}\left((\partial_{ij}v(x))^2\left(1-\frac{2(N-1)|x|}{|x|^2+1}\right)-\frac{2(N-1)^2|x|}{|x|^2+1}\left(\partial_k v(x)\right)^2\right)\\
			&= \frac{(|x|^2+1)^4}{16}\left(1-\frac{2(N-1)|x|}{|x|^2+1}\right)|D^2 v|^2-\frac{1}{2}(N-1)^2|x|(|x|^2+1)\left|\nabla_{\SS^{N-1}} v\right|^2.
		\end{split}
	\end{equation*}
	Letting $R= \frac{1}{8(N-1)^2}$, we see that, for $|x|\leq R$ we have that 
	\begin{equation}\label{eq:pointwise estimate on derivatives}
		\begin{split}
		\left|D_{\SS^{N-1}}^2 v(x)\right|^2+\left|\nabla_{\SS^{N-1}}v(x)\right|^2&\geq  \frac{(|x|^2+1)^4}{32}|D^2 v|^2+\frac{1}{2}\left|\nabla_{\SS^{N-1}} v\right|^2\\
		&= \frac{(|x|^2+1)^4}{32}|D^2 v|^2+\frac{(|x|^2+1)^2}{8}|\nabla v|^2.
		\end{split}
	\end{equation}
        Using this and recalling \eqref{eq: W 2,p norm sphere}, we obtain the following chain
	\begin{equation*}
		\begin{split}
			\left\|v\right\|_{W^{2,p}(B_{R}^{N-1}(0))}^p&=\int_{B_{R}^{N-1}(0)} \left(|v(x)|^p+ |\nabla v(x)|^p+|D^2 v(x)|^p\right)\,dx\\
                &\leq \int_{B_{R}^{N-1}(0)} \left(|v(x)|^p + 2\left(|\nabla v(x)|^2+|D^2 v(x)|^2\right)^{p/2}\right)\,dx\\
			&\leq \int_{B_{R}^{N-1}(0)}\left(|v(x)|^p+2\frac{32^{p/2}}{(|x|^2+1)^p}\left(\frac{(|x|^2+1)^4}{32}|D^2 v|^2+\frac{(|x|^2+1)^2}{8}|\nabla v|^2\right)^{p/2}\right)\,dx\\
			&\leq 2^{1+\frac{5p}{2}}\int_{B_{R}^{N-1}(0)}\left(|v|^p+ \left(|\nabla_{\SS^{N-1}}v|^2+|D_{\SS^{N-1}}^2 v|^2\right)^{p/2}\right)dx\\
                &\leq 2^{N+5+\frac{5p}{2}}\int_{B_{R}^{N-1}(0)}\left(|v|^2+\left(|\nabla_{\SS^{N-1}}v|+|D_{\SS^{N-1}}^2 v|\right)^p\right)\frac{(|x|^2+1)^{N-1}}{2^{N-1}}dx\\
			&\leq 2^{N+5+\frac{5p}{2}}\int_{\SS^{N-1}}\left(|v|^p+\left(|\nabla_{\SS^{N-1}}v|+|D_{\SS^{N-1}}^2 v|\right)^{p}\right)d\mathcal{H}^{N-1}\\
			&=2^{N+5+\frac{5p}{2}}\left(\nr v\nr_{L^p(\SS^{N-1})}^p + \nr |\nabla_{\SS^{N-1}}v| + |D_{\SS^{N-1}}^2 v|\nr_{L^p(\SS^{N-1})}^p \right),
		\end{split}
	\end{equation*}
    where, in the first inequality, we used the pointwise inequality $a^p+b^p\leq 2(a+b)^p$ with $a,b\geq 0$, in the second inequality, we used~\eqref{eq:pointwise estimate on derivatives}, in the fourth inequality, we used the fact that $(a^2+b^2)^{p/2}\leq (a+b)^p$ with $a,b\geq 0$.
       From this, we now conclude the result using the triangle inequality
       \begin{equation*}
           \nr v\nr_{W^{2,p}\left(B_{R}^{N-1}(0)\right)}\leq 2^{\frac{N+5}{p}+ \frac{5}{2}}\left(\nr v\nr_{L^p(\SS^{N-1})}^p + \nr |\nabla_{\SS^{N-1}}v| + |D_{\SS^{N-1}}^2 v|\nr_{L^p(\SS^{N-1})}^p\right)^{1/p}\leq C \nr v\nr_{W^{2,p}(\SS^{N-1})},
       \end{equation*}
       where $C= 3 \cdot 2^{\frac{N+5}{p}+ \frac{5}{2}}$.   
    \end{proof}

	\section*{Acknowledgements}
	This work has been supported by the Australian Research Council (ARC) Discovery Early Career Researcher Award (DECRA) DE230100954 ``Partial Differential Equations: geometric aspects and applications''.
	João Gonçalves da Silva is supported by a Scholarship for International Research Fees at The University of Western Australia.
	Giorgio Poggesi is a member of the Australian Mathematical Society (AustMS) and was also supported by the Australian Academy of Science J G Russell Award.\\
	This work was carried out while João Gonçalves da Silva was visiting the University of Adelaide, which he wishes to thank for the hospitality and support.
    
    The authors also want to thank Enrico Valdinoci and Edoardo Proietti Lippi for very interesting discussions regarding Gagliardo-Nirenberg-type interpolation inequalities.
	%%%%%%%%%%%
	%of the Australian Research Council (ARC). 
	%%%%%%%%%%%%%%%%
	%The author is a member of the Australian Mathematical Society (AustMS) and %the Gruppo Nazionale Analisi Matematica
	%Probabilit\`a e Applicazioni (GNAMPA) of the Istituto Nazionale di Alta %Matematica (INdAM).
	%%%%%%%%%%%
	% INdAM/GNAMPA.  
	%%%%%%%%%%%%%%%%%%%%%
	%%%%%%%%%%%%%%%%%%%%%%%%%%%%%%%%%%%%%%%%%%%%%%%%%%%%%%%%%%%%%%%%%
	
	%%%%%%%%%%%%%%
	\newpage
	%%%%%%%%%%%%%%%%%%%

\end{document}